\newtheoremstyle{smartbrackets}
  {3pt} 
  {3pt} 
  {\itshape} 
  {} 
  {\normalfont} 
  {} 
  { } 
  {%
    \thmname{#1}~\thmnumber{#2}%
    \@ifnotempty{#3}{ [\thmnote{#3}]}%
  }
\theoremstyle{smartbrackets}
\newtheorem{lemma}{Lemma}[section]
\newtheorem{theorem}{Theorem}
\theoremstyle{definition}
\theoremstyle{plain}  
\theoremstyle{remark}
\newtheorem{remark}[theorem]{Remark}
\numberwithin{theorem}{section}
\numberwithin{lemma}{section}
\numberwithin{equation}{section}
\numberwithin{table}{section}
\numberwithin{figure}{section}
\def\V{V_h}
\def\Vo{V_{h,0}}
\def\O{\mathcal{O}}
\def\calO{\O}
\definecolor{blau}{RGB}{0, 51, 255}
\definecolor{hellblau}{RGB}{153, 204, 255}
\definecolor{hellrot}{RGB}{255, 0, 0}
\definecolor{firebrick}{RGB}{176, 34, 34} 
\definecolor{deep_pink}{RGB}{255, 20, 147} 
\definecolor{sky_blue}{RGB}{74, 112, 139}
\definecolor{slate_blue}{RGB}{71, 60, 139}
\definecolor{chartreuse}{RGB}{118, 238, 0}
\definecolor{chartreuseL}{RGB}{228, 255, 150}
\definecolor{light_blue}{RGB}{178, 223, 238}
\definecolor{dodge_blue}{RGB}{17, 78, 138}
\definecolor{code_backg}{RGB}{238, 216, 174}
\definecolor{myBlue1}{RGB}{101,149,239}  
\definecolor{myBlue2}{RGB}{113,104,238} 
\definecolor{myBlue3}{RGB}{30,144,255} 
\definecolor{myGreen1}{RGB}{154,204,50} 
\definecolor{myGreen2}{RGB}{69,169,0} 
\definecolor{myGreen3}{RGB}{154,205,50} 
\definecolor{myGreen4}{RGB}{105,139,34} 
\definecolor{myRed1}{RGB}{210,105,30} 
\definecolor{myRed2}{RGB}{165,42,42} 
\definecolor{myRed3}{RGB}{139,26,26} 
\definecolor{myLGray}{RGB}{225,225,225} 
\definecolor{mycolor1}{rgb}{0.00000,0.44700,0.74100}%
\definecolor{mycolor2}{rgb}{0.85000,0.32500,0.09800}%
\definecolor{mycolor3}{rgb}{0.92900,0.69400,0.12500}%
\definecolor{mycolor4}{rgb}{0.49400,0.18400,0.55600}%
\definecolor{mycolor5}{rgb}{0.46600,0.67400,0.18800}%
\definecolor{mycolor6}{rgb}{0.30100,0.74500,0.93300}%
\definecolor{mycolor7}{rgb}{0.63500,0.07800,0.18400}%
\begin{document}
\title{An implicit--explicit BDF--Galerkin scheme of second order for the nonlinear thermistor problem} 
\author[]{R.~Altmann$^\dagger$, A.~Moradi$^{\dagger}$ }
\address{${}^{\dagger}$ Institute of Analysis and Numerics, Otto von Guericke University Magdeburg, Universit\"atsplatz 2, 39106 Magdeburg, Germany}
\email{\{robert.altmann, afsaneh.moradi\}@ovgu.de}
\thanks{${^*}$ This project is funded by the Deutsche Forschungsgemeinschaft (DFG, German Research Foundation) through the project 446856041. \\ \indent This article will be published in \emph{IMA Journal of Numerical Analysis}.}
%
%
\date{\today}
\keywords{}
%
%
\begin{abstract}
This paper proposes and analyzes an implicit--explicit BDF--Galerkin scheme of second order for the time-dependent nonlinear thermistor problem. For this, we combine the second-order backward differentiation formula with special extrapolation terms for time discretization with standard finite elements for spatial discretization. Unconditionally superclose and superconvergent error estimates are established, relying on two key techniques. First, a time-discrete system is introduced to decompose the error function into its temporal and spatial components. Second, superclose error estimates between the numerical solution and the interpolation of the time-discrete solution are employed to effectively handle the nonlinear coupling term. Finally, we present numerical examples that validate the theoretical findings, demonstrating the unconditional stability and the second-order accuracy of the proposed method.
\end{abstract}
%
%
\maketitle
\pagestyle{plain}
%
\noindent
{\tiny {\bf Keywords:} nonlinear thermistor problem, implicit--explicit, supercloseness, superconvergence, BDF--Galerkin method \\
\noindent
{\tiny {\bf AMS subject classifications.}  {\bf 65M12}, {\bf 65N30}, {\bf 65L80}}} 
%
%
\section{Introduction}\label{Sec.intro}
This paper deals with the numerical approximation of the time-dependent nonlinear thermistor problem
\begin{subequations}
\label{eq:thermistor}
    \begin{alignat}{3}
        \dot{u}\,-\,\Delta u\,&= \sigma(u)|\nabla \phi|^2&\quad \text{in $\Omega\times(0,T]$},\label{eq:thermistor:a}\\[1mm]
    -\,\nabla\cdot(\sigma(u)\nabla \phi)\,&= 0& \quad \text{in $\Omega\times(0,T]$},\label{eq:thermistor:b}\\
    u &=  0 &\quad \text{on $\partial \Omega \times (0,T]$},\label{eq:u:BC}\\
    \phi &=  g &\quad \text{on $\partial \Omega \times (0,T]$},\label{eq:phi:BC}\\
    u(\cdot ,0) &=  u^0 & \quad \text{in $\Omega$},\label{eq:IC}
\end{alignat}
\end{subequations}
also known as Joule heating problem~\cite{allegretto1992}. Therein, $\Omega\subseteq \mathbb{R}^2$ is a bounded two-dimensional domain with boundary $\partial\Omega$, $\dot{u}$ denotes the time derivative $\frac{\partial u}{\partial t}$, and $T<\infty$ is the time horizon. 
This coupled nonlinear parabolic--elliptic system describes the electric heating of a conducting body, 
in which the unknown $u$ is the temperature and $\phi$ represents the electric potential. Moreover, $ \sigma(u) $ is the temperature-dependent electrical conductivity, $ \sigma(u)|\nabla\phi|^2 $ acts as heat source, and $u^0$ and $g$ are given smooth functions. 

Over the past decades, the thermistor problem has been widely studied in both theoretical and numerical aspects due to its significance in electrical and thermal engineering applications. Theoretical investigations have established the existence, uniqueness, and regularity properties of solutions.
The existence of weak solutions was rigorously established in~\cite{cimatti1992} using a variational approach and techniques from functional analysis. Notably, the existence of solutions was also addressed in~\cite{allegretto1992,AntC94}, while~\cite{allegretto2002} extended the analysis to prove the existence of weak solutions over arbitrarily large time intervals. Furthermore, the existence and uniqueness of a $C^{\alpha}$-solution in three-dimensional space was established in~\cite{yuan1994a}. 

In parallel, significant efforts have been made to develop efficient numerical methods for approximating the solution of the thermistor problem~\eqref{eq:thermistor}. In~\cite{elliott1995}, optimal $L^2$-error estimates were derived for a semi-discrete and an implicit--explicit Euler scheme under the condition $ \tau\leq \calO(h^{d/6})$ with spatial dimension~$d \leq3$, where $\tau$ represents the time step size and $h$ denotes the spatial mesh size. In~\cite{zhao1994}, a mixed finite element method (FEM) is combined with an implicit--explicit Euler scheme to obtain optimal $L^2$-error estimates under the time step size condition $\tau\leq \calO(1/\sqrt{\log h^{-1}})$. Additionally, optimal $L^2$-error estimates were established in~\cite{akrivis2005} for fully discrete numerical methods, where the spatial discretization was performed using a high-order FEM and the time approximation employed a combination of rational implicit and explicit multi-step schemes. However, a condition $\tau \leq \calO(h^{3/2p})$, where $p$ is the order of time discretization, is required. In~\cite{shi2018,shi2019}, superconvergence was obtained under the time step size restriction $\tau \leq \calO(h^{1+\alpha})$ with $\alpha > 0$. With a focus on mixed boundary conditions on Lipschitz domains, further convergence results for a wide class of finite element schemes combined with the implicit Euler scheme were established in~\cite{JenMP20}.

The necessity of step size restrictions in previous works arises from the direct application of inverse inequalities using traditional techniques. To eliminate such restrictions, a splitting technique has been proposed and investigated in~\cite{li2012,li2014}. Unconditional optimal error estimates for fully discrete schemes involving BDF and Crank--Nicolson, using the same splitting technique, were considered in~\cite{gao2014,gao2016}. The splitting technique introduces a time-discrete (elliptic) system that decomposes the total error function into temporal and spatial components, making the spatial error independent of $\tau$. With this, time step size conditions can be removed by leveraging inverse inequalities and mathematical induction. 

Despite these advances, there has been limited work on deriving unconditionally superclose and superconvergent error estimates. Here, supercloseness refers to the improved accuracy between the numerical solution and an interpolation of the exact solution. On the other hand, superconvergence describes higher-than-expected convergence rates, often utilized in post-processing to enhance the overall accuracy. In~\cite{yang2023}, such error estimates for an implicit--explicit fully discrete scheme of first order were presented. The results demonstrated supercloseness of order $\mathcal{O}(h^2 + \tau)$ for the difference between the interpolated exact solution and the numerical solution in the $H^1$-norm.
Furthermore, an interpolation based post-processing yields superconvergence estimates of order $\mathcal{O}(h^2 + \tau)$, again in the $H^1$-norm.

Implicit--explicit methods of higher order in time are very rare. To the best of our knowledge, the only positive example is~\cite{gao2016}, where unconditionally optimal $L^2$-error estimates for a third-order implicit--explicit BDF scheme were derived. This, however, neither includes supercloseness nor superconvergence results. 
Building on this, we propose and analyze a fully discrete implicit--explicit BDF--Galerkin scheme of second order for the nonlinear thermistor problem~\eqref{eq:thermistor}, using a variation of the second-order BDF for time discretization and standard (bi)linear finite elements for spatial discretization. The scheme is explicit in the nonlinearity and further allows a decoupled computation of the two solution components. Similar as in \cite{AltM22}, this improves the computational efficiency compared to fully implicit methods. The analysis relies on the above mentioned splitting of the error components~\cite{li2012,li2014}, i.e., high-accuracy estimates for the finite element discretization and an error estimate of the numerical solution compared to the interpolation of the solution to the time-discrete system. In contrast to the approach in~\cite{gao2016}, we are able to prove supercloseness and superconvergence for the proposed scheme.

The remainder of the paper is organized as follows. In the forthcoming section, we introduce the necessary preliminaries and present the implicit--explicit BDF--Galerkin scheme. Sections \ref{Sec.Temporal} and \ref{Sec.Superclose} provide a detailed investigation of the error estimates in the temporal and spatial directions, respectively. Then, we collect the main results on superclose and superconvergent error estimates in Section~\ref{Sec:main}. In Section~\ref{Sec.Numeric}, numerical results are presented to confirm the validity of the presented analysis and identify limitations for non-smooth solutions. Finally, we conclude and give an outlook in Section~\ref{Sec:Conclusion}.
%
%
\section{Preliminaries}\label{Sec.Galerkin}
Before presenting the numerical scheme, we introduce some notation that will be used throughout the paper. For simplicity, we use $C$ to denote a generic positive constant, which is independent of the mesh size $h$ and the time step size $\tau$ but may vary from line to line in estimates. 

For an integer $k\geq 0$ and $1\leq p\leq \infty$, let $W^{k,p}(\Omega)$ denote the Sobolev space equipped with the norm $\|\cdot\|_{k,p}$ as introduced in~\cite{adams2003}. For $p=2$, we write $H^k(\Omega)\coloneqq W^{k,2}(\Omega)$ with corresponding norm
\[
    \|\cdot\|_{k}
    := \|\cdot\|_{k,2}.
\]
Moreover, we use $(\cdot,\cdot)$ as notation for the standard $L^2$-inner product. To shorten notation, we further introduce the spaces 
\[
    V\coloneqq H^1(\Omega), \qquad
    V_0\coloneqq H_0^1(\Omega).
\]
%
\subsection{Interpolation operator and important estimates}
Throughout this paper, $\mathcal{T}_h$ denotes a conforming mesh of the two-dimensional domain~$\Omega$ with mesh size $h$. The corresponding finite element space $\V$ is defined as
\begin{equation}\label{eq:space:Vh}
    \V \coloneqq \big\{ v \in C(\overline{\Omega}) : v|_K \in \mathcal{P}(K)\ \text{for all}\ K \in \mathcal{T}_h \big\},
\end{equation}
where 
\[
    \mathcal{P}(K) = 
    \begin{cases}
        \mathrm{span}\{1, x, y\}, & \text{if } K \text{ is a triangle}, \\
        \mathrm{span}\{1, x, y, xy\}, & \text{if } K \text{ is a rectangle }.
    \end{cases}
\]
Note that, in this work, we only consider rectangles aligned with the coordinate axes, leading to the standard bilinear finite element space~$Q_1$. For more general cases, isoparametric mappings are needed. 
We also make use of the subspace $\Vo\coloneqq \V\cap V_0$, which consists of discrete finite element functions that vanish on the boundary.

Let $I_h\colon W^{2,p}(\Omega)\rightarrow \V$ be the standard Lagrange interpolation operator associated to the mesh~$\mathcal{T}_h$. Then, the well-known estimates 
\begin{align}
    \|u-I_hu\|_{0,p} + h\, \|u-I_h u\|_{1,p}
    &\leq C\, h^2\, \|u\|_{2,p}, \label{ineq:Ih}\\
    \|I_hu\|_{1,p}
    &\leq C\, \|u\|_{1,p} \label{ineq:sta:Ih}
\end{align}
hold for all $u\in W^{2,p}(\Omega)$ and $1\leq p\leq \infty$; see~\cite{dupont1974} and \cite[Ch.~1]{thomee2007}. 
We summarize some further results that will be used throughout the paper.
\begin{lemma}(Interpolation estimates, {\cite[Ch.~2]{lin2007}})
\label{lem:sup:Ih}
Suppose $ u \in H^3(\Omega)\cap V_0$. Then for any $v_h \in \Vo $ it holds that 
\begin{align}
    \big(\nabla (u - I_h u), \nabla v_h\big) 
    &\leq C\, h^2\, \|u\|_3\, \|\nabla v_h\|_0, \label{ineq:lem:Ih}\\
    \big(\nabla (u - I_h u),  v_h\big) 
    &\leq C\, h^2\, \|u\|_3\, \| v_h\|_1. \label{ineq:lem:Ih2}
\end{align}
\end{lemma}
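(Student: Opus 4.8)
The plan is to treat the two estimates separately, as only~\eqref{ineq:lem:Ih} genuinely relies on the supercloseness phenomenon; the estimate~\eqref{ineq:lem:Ih2} follows from an integration by parts combined with the standard bound~\eqref{ineq:Ih}.

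For~\eqref{ineq:lem:Ih2} I would set $\eta \coloneqq u - I_h u$ and integrate by parts in each coordinate direction. Since $u \in V_0$ vanishes on $\partial\Omega$ and $I_h u$ reproduces this data (on every boundary edge $I_h u$ is the one-dimensional interpolant of $u|_{\partial\Omega} = 0$, hence itself zero), the trace of $\eta$ on $\partial\Omega$ vanishes. Therefore, for each partial derivative $\partial_j$,
\[
    (\partial_j \eta,\, v_h) = -(\eta,\, \partial_j v_h),
\]
with no boundary contribution. Cauchy--Schwarz and the interpolation estimate $\|\eta\|_0 \le C\,h^2\,\|u\|_2$ from~\eqref{ineq:Ih} then give $|(\partial_j\eta, v_h)| \le C\,h^2\,\|u\|_2\,\|v_h\|_1 \le C\,h^2\,\|u\|_3\,\|v_h\|_1$, which is~\eqref{ineq:lem:Ih2}. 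Note that this direction needs neither the superconvergence machinery nor the extra regularity beyond $H^2$.

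The estimate~\eqref{ineq:lem:Ih} is the core of the lemma. I would first localize,
\[
    \big(\nabla \eta,\, \nabla v_h\big) = \sum_{K \in \mathcal{T}_h} \int_K \nabla \eta \cdot \nabla v_h \dx,
\]
and observe that a direct elementwise Cauchy--Schwarz with $\|\nabla \eta\|_{0,K} \le C\,h\,\|u\|_{2,K}$ yields only the suboptimal order $\mathcal{O}(h)$. The missing power of $h$ comes from an integral identity on each element: since $\nabla v_h$ is a polynomial of low degree on $K$ (a constant on a triangle, while $\partial_x v_h$ is affine in $y$ and $\partial_y v_h$ affine in $x$ on a quadrilateral), the contribution $\int_K \partial_j\eta\,\partial_j v_h\dx$ can be expanded so that the leading, second-derivative part of $\eta$ integrates to zero against $\nabla v_h$, leaving a remainder bounded by $C\,h^2\,|u|_{3,K}\,\|\nabla v_h\|_{0,K}$. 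Summing and applying the discrete Cauchy--Schwarz inequality in the element index gives
\[
    \big|\big(\nabla \eta,\, \nabla v_h\big)\big| \le C\,h^2 \Big(\sum_K |u|_{3,K}^2\Big)^{1/2}\Big(\sum_K \|\nabla v_h\|_{0,K}^2\Big)^{1/2} = C\,h^2\,|u|_3\,\|\nabla v_h\|_0 \le C\,h^2\,\|u\|_3\,\|\nabla v_h\|_0,
\]
as claimed.

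The main obstacle is exactly the vanishing of the leading term that upgrades $\mathcal{O}(h)$ to $\mathcal{O}(h^2)$. On rectangular (bilinear) elements this rests on an exact algebraic integral identity for $\partial_j\eta$ on each cell. On triangular (linear) elements no such cancellation is available on a single triangle, since the functional $u \mapsto \int_K \nabla(u - I_h u)\cdot \nabla v_h\dx$ does not annihilate all quadratics; the remedy is to group the triangles into symmetric patches (pairs forming a parallelogram on a uniform mesh), on which the associated functional does annihilate $\mathcal{P}_2$ by symmetry, so that the Bramble--Hilbert lemma supplies the factor $h^2$ together with the seminorm $|u|_3$ over the patch. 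This patchwise cancellation is where the mesh geometry enters and constitutes the technical heart of the argument; it is precisely the integral expansion machinery of~\cite[Ch.~2]{lin2007}, which we invoke.
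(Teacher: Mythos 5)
The paper offers no proof of this lemma beyond the citation to \cite[Ch.~2]{lin2007}, and your sketch correctly reconstructs exactly the argument contained there: the elementary integration-by-parts bound (using only $\|u-I_hu\|_0\le Ch^2\|u\|_2$ and the vanishing traces) for \eqref{ineq:lem:Ih2}, and the elementwise integral-identity/Bramble--Hilbert cancellation for \eqref{ineq:lem:Ih}. Your observation that the cancellation is exact per cell only for bilinear rectangles, while linear triangles require grouping into (near-)parallelogram patches --- i.e.\ a uniformity assumption on $\mathcal{T}_h$ that the lemma statement suppresses --- is accurate and is the one genuine hypothesis the paper leaves implicit.
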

%
\begin{lemma}(Inverse estimate, {\cite[Lem.~4.5.3]{brenner2008}})
\label{lem:inverse}
Consider parameters $1\leq q\leq p\leq \infty$ and $m\geq 0$. Then it holds that
\begin{equation}
    \|v_h\|_{m,p} 
    \leq \,Ch^{(2/p-2/q)}\|v_h\|_{m,q}\qquad
    \text{for all $v_h\in \V$}.\label{ineq:invese:pq}
\end{equation}
\end{lemma}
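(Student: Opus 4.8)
The plan is to reduce the global inequality to a single element by an affine (respectively bilinear) change of variables to a fixed reference cell, exploit that all norms on a finite-dimensional polynomial space are equivalent, and then reassemble the local estimates by summing over the mesh. Concretely, I would first fix an element $K\in\mathcal{T}_h$, pull $v_h|_K$ back to the reference cell $\hat K$ (the reference triangle or the unit square) via the map $F_K(\hat x)=B_K\hat x+b_K$, and set $\hat v\coloneqq v_h\circ F_K\in\mathcal{P}(\hat K)$. Since $\mathcal{P}(\hat K)$ is \emph{finite-dimensional}, all norms on it are equivalent; in particular there is a constant depending only on $\hat K,m,p,q$ with
\[
    \|\hat v\|_{W^{m,p}(\hat K)} \le C\,\|\hat v\|_{W^{m,q}(\hat K)}.
\]
This reference-cell norm equivalence is the conceptual heart of every inverse inequality, and it is precisely where the restriction to a fixed, low polynomial degree enters.

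Next I would transport this back to $K$ using the standard scaling relations for Sobolev seminorms under $F_K$. Under the shape-regularity (quasi-uniformity) of $\mathcal{T}_h$ one has $\|B_K\|\le C h$, $\|B_K^{-1}\|\le C h^{-1}$ and $|\det B_K|\sim h^2$ in two dimensions, so that for the seminorm of each order $\ell$,
\[
    |v_h|_{W^{\ell,p}(K)} \le C\,h^{-\ell}\,h^{2/p}\,|\hat v|_{W^{\ell,p}(\hat K)}, \qquad |\hat v|_{W^{\ell,q}(\hat K)} \le C\,h^{\ell}\,h^{-2/q}\,|v_h|_{W^{\ell,q}(K)}.
\]
Chaining these with the reference estimate, the factors $h^{\pm\ell}$ cancel for every $\ell$, leaving the common power $h^{2/p-2/q}$; summing over $0\le\ell\le m$ then yields the local bound $\|v_h\|_{W^{m,p}(K)}\le C\,h^{2/p-2/q}\,\|v_h\|_{W^{m,q}(K)}$, whose exponent records exactly the spatial dimension $d=2$.

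Finally I would sum over the elements. For $p<\infty$ the local bounds give $\big(\sum_K\|v_h\|_{W^{m,p}(K)}^p\big)^{1/p}\le C\,h^{2/p-2/q}\big(\sum_K\|v_h\|_{W^{m,q}(K)}^p\big)^{1/p}$, and since $q\le p$ the elementary discrete embedding $\ell^q\hookrightarrow\ell^p$ (namely $\big(\sum_K a_K^p\big)^{1/p}\le\big(\sum_K a_K^q\big)^{1/q}$ for $a_K\ge0$) upgrades the right-hand side to $\|v_h\|_{m,q}$; the case $p=\infty$ follows by taking a maximum over elements. The step I expect to be the most delicate is not any single estimate but the bookkeeping: keeping the $h$-exponents consistent across the two scaling relations so that they cancel per order, and respecting the direction $q\le p$ in the final summation. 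It also has to be stated explicitly that quasi-uniformity is what allows the element-wise diameter $h_K$ to be replaced by the single global parameter $h$, since without it only a local, mesh-size-dependent version of the estimate would hold.
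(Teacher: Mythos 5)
The paper offers no proof of this lemma --- it simply cites \cite[Lem.~4.5.3]{brenner2008} --- and your argument is precisely the standard scaling proof given there, so it is correct and takes essentially the same route. One small refinement worth making explicit: the norm equivalence on the reference cell should be invoked \emph{seminorm by seminorm} (i.e.\ $|\hat v|_{W^{\ell,p}(\hat K)}\le C\,|\hat v|_{W^{\ell,q}(\hat K)}$, which holds because the two seminorms have the same kernel on the finite-dimensional space $\mathcal{P}(\hat K)$), since using the full-norm equivalence $\|\hat v\|_{W^{m,p}(\hat K)}\le C\|\hat v\|_{W^{m,q}(\hat K)}$ mixes derivative orders and would spoil the order-by-order cancellation of the factors $h^{\pm\ell}$ that your bookkeeping relies on.
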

\begin{lemma}(Discrete Gronwall inequality, \cite{heywood1990})
\label{lem:Gronwall}
Let $\tau$, $B$ as well as $a_k$, $b_k$, $c_k$, and $\gamma_k$ be non-negative numbers for $k\geq0$ such that
\[
    a_n+\tau\sum_{k=0}^nb_k\leq\tau \sum_{k=0}^n\gamma_ka_k\,+\,\tau \sum_{k=0}^nc_k+B,\qquad n\geq 0.
\]  
Suppose that $\tau\gamma_k<1$ for all $k$ and set $\sigma_k \coloneqq (1-\tau\gamma_k)^{-1}$. Then it follows that
\[
    a_n+\tau\sum_{k=0}^nb_k
    \leq \bigg(\tau\sum_{k=0}^nc_k+B\bigg)\exp\bigg(\tau\sum_{k=0}^n\gamma_k\sigma_k\bigg),\qquad n\geq 0.
\]
\end{lemma}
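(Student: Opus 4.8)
The plan is to treat this as a standard discrete Gronwall estimate of Heywood--Rannacher type: first isolate the diagonal term $\tau\gamma_n a_n$ sitting on the right-hand side, then convert the resulting inequality into a one-step recurrence whose multiplier is exactly $\sigma_n$, and finally unroll that recurrence. Writing $F_n \coloneqq \tau\sum_{k=0}^n c_k + B$ for the (monotone, non-negative) forcing, the first step is to split the $k=n$ summand off the bound and move it to the left, which gives
\[
    (1-\tau\gamma_n)\,a_n + \tau\sum_{k=0}^n b_k \;\leq\; \tau\sum_{k=0}^{n-1}\gamma_k a_k + F_n .
\]
Since $\tau\gamma_n<1$ the factor $1-\tau\gamma_n$ is positive, so I multiply by $\sigma_n=(1-\tau\gamma_n)^{-1}\geq 1$; because $\sigma_n\geq1$ the non-negative term $\tau\sum_{k=0}^n b_k$ only grows, hence it survives on the left and I obtain
\[
    a_n + \tau\sum_{k=0}^n b_k \;\leq\; \sigma_n\,\psi_n,
    \qquad
    \psi_n \coloneqq \tau\sum_{k=0}^{n-1}\gamma_k a_k + F_n .
\]

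The crucial algebraic observation is the identity $\tau\gamma_k\sigma_k=\sigma_k-1$, equivalently $1+\tau\gamma_k\sigma_k=\sigma_k$, which follows directly from the definition of $\sigma_k$. Using $\psi_{n+1}-\psi_n=\tau\gamma_n a_n+\tau c_{n+1}$ together with the bound $a_n\leq\sigma_n\psi_n$ just derived, this identity turns the increment estimate into the clean one-step recurrence
\[
    \psi_{n+1} \;\leq\; (1+\tau\gamma_n\sigma_n)\,\psi_n + \tau c_{n+1}
    \;=\; \sigma_n\,\psi_n + \tau c_{n+1},
    \qquad \psi_0 = F_0 .
\]
Unrolling this linear recurrence and using $\sigma_j\geq1$ to bound every partial product $\prod_{j=i}^n\sigma_j$ by the full product $\prod_{j=0}^n\sigma_j$, I factor the latter out of all the $c_i$-contributions and of $\psi_0$; collecting the forcing terms reassembles $F_n$ and yields $\psi_n\leq F_n\prod_{j=0}^{n-1}\sigma_j$.

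It then remains to pass from $\psi_n$ back to the quantity of interest and to replace the product by the exponential. The first bound of this plan gives $a_n+\tau\sum_{k=0}^n b_k\leq\sigma_n\psi_n\leq F_n\prod_{j=0}^{n}\sigma_j$, and the elementary inequality $\sigma_j\leq e^{\sigma_j-1}=e^{\tau\gamma_j\sigma_j}$ (again via $\tau\gamma_j\sigma_j=\sigma_j-1$) converts the product into $\exp\bigl(\tau\sum_{j=0}^n\gamma_j\sigma_j\bigr)$, which is precisely the claimed estimate.

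The point requiring care is the bookkeeping around the $b_k$-term: it must be retained on the left throughout the isolation and the multiplication by $\sigma_n$, which works exactly because $\sigma_n\geq1$. The remainder is the standard telescoping, made painless by the identity $1+\tau\gamma_k\sigma_k=\sigma_k$, which keeps the recurrence multiplier from accumulating spurious factors and thereby delivers the sharp exponent $\tau\sum_{k}\gamma_k\sigma_k$ rather than a cruder one.
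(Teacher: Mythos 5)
Your argument is correct and complete: isolating the diagonal term $\tau\gamma_n a_n$, multiplying by $\sigma_n$, deriving the one-step recurrence $\psi_{n+1}\leq\sigma_n\psi_n+\tau c_{n+1}$ via the identity $1+\tau\gamma_n\sigma_n=\sigma_n$, and then bounding the product $\prod_j\sigma_j$ by $\exp\bigl(\tau\sum_j\gamma_j\sigma_j\bigr)$ using $1+x\leq e^x$ all check out, and the handling of the $b_k$-term (kept on the left because $\sigma_n\geq1$) is done correctly. The paper itself gives no proof of this lemma --- it simply cites Heywood--Rannacher --- and your write-up is precisely the standard argument from that reference, so there is nothing to contrast.
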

%
\subsection{Weak formulation and regularity assumptions}
The weak formulation of system~\eqref{eq:thermistor} seeks $u\colon(0,T]\rightarrow V_0$ and $\phi\colon(0,T]\rightarrow V$ such that
\begin{subequations}\label{eq:weakForm}
    \begin{alignat}{2}
        (\dot{u},\xi_{u})+(\nabla u,\nabla \xi_u)&=  (\sigma(u)|\nabla\phi|^2,\xi_u),\\
        (\sigma(u)\nabla\phi,\nabla\xi_{\phi})&= 0 
    \end{alignat}
\end{subequations}
for all $\xi_u,\, \xi_\phi\in V_0$ with boundary conditions \eqref{eq:u:BC} and \eqref{eq:phi:BC}. Following~\cite{yang2023}, we assume that $\sigma\in W^{2,\infty}(\mathbb{R})$ is Lipschitz continuous, i.e., there exist constants $L_{\sigma,p}>0$, $p<\infty$, such that
\[
    \| \sigma(u) - \sigma(v) \|_{0,p} 
    \leq L_{\sigma,p} \| u - v \|_{0,p}
\]
for all $u,\, v \in V$. Moreover, $\sigma$ satisfies the uniform bounds
\begin{equation}\label{ineq:sigma}
    0<k_1 \leq \sigma(s) 
    \leq k_2 \qquad \text{for all } s \in \mathbb{R}
\end{equation}
for some constants $k_1, k_2 > 0$. 
Due to $\sigma \in W^{2,\infty}(\mathbb{R})$, also the first derivative is uniformly bounded and Lipschitz continuous. Hence, there exists a constant $L_{\sigma'}>0$ such that
\[
    \| \sigma'(u) - \sigma'(v) \|_{0} 
    \leq L_{\sigma'} \| u - v \|_{0}
\]
for all $u,\, v \in V$ and we have 
\[
|\sigma'(s)| \leq {k}_3 \qquad \text{for all } s \in \mathbb{R}.
\] 
As in \cite{gao2014,li2014}, we assume that the solutions to the initial boundary value problem \eqref{eq:thermistor} exist and satisfy the regularity conditions 
\begin{equation}\label{eq:regularity}
    \left\{
    \begin{array}{l}
         \|u\|_{L^{\infty}(H^3(\Omega))} 
         + \|\dot{u}\|_{L^{\infty}(W^{1,\infty}(\Omega))} 
         + \|\ddot{u}\|_{L^{\infty}(H^1(\Omega))} 
         + \|\dddot{u}\|_{L^{\infty}(L^2(\Omega))} \leq C, \\
         \|\phi\|_{L^{\infty}(H^3(\Omega))} 
         + \|\dot{\phi}\|_{L^{\infty}(H^3(\Omega))} 
         + \|\ddot{\phi}\|_{L^{\infty}(W^{2,4}(\Omega))} 
         + \|g\|_{L^{\infty}(W^{2,4}(\Omega))} \leq C.
    \end{array}
    \right.
\end{equation}
\begin{remark}
In order to meet these strong regularity assumptions, one would expect at least a domain with $C^{2,1}$-boundary, initial data $u^0 \in H^3(\Omega)$, boundary data $g\in H^{5/2}(\partial\Omega)$, as well as certain compatibility conditions between $g$ and $\phi(0)$. 
In the numerical experiments of Section~\ref{Sec.Numeric}, however, we will observe that second-order convergence can already be reached for less regular solutions. 
\end{remark}
%
\subsection{Implicit--explicit BDF--Galerkin scheme}
To define the fully discrete scheme, let $\{t^n\}_{n=0}^N$ be a partition in time direction with final time $T$, time step size~$\tau=T/N$, and $t^n=n\tau$. For any sequence of functions $\{f^n\}_{n=0}^N$, we define the second-order BDF operator by
\begin{equation}\label{eq:BDF2}
    D_{\tau}f^{n} 
    = \frac{1}{2\tau}\, \big(3f^{n}-4f^{n-1}+f^{n-2}\big), \qquad 2\leq n\leq N.
\end{equation}
\begin{lemma}(Telescope formula for $D_\tau$, \cite{liu2013})
\label{lem:tel:BDF}
With the definition of the BDF operator $D_\tau$ in \eqref{eq:BDF2},  the following telescope formula holds
    \begin{align*}
        \big(D_\tau f^{n},f^{n}\big) 
        = \frac{1}{4\tau}\, \Big(\|f^{n}\|_0^2 -  \|f^{n-1}\|_0^2\Big) + \frac{1}{4\tau}\, \Big( \|2f^{n}-f^{n-1}\|_0^2 -  \|2f^{n-1}-f^{n-2}\|_0^2\Big).
    \end{align*}
    \end{lemma}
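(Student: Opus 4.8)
The plan is to prove the formula by a direct algebraic expansion, reducing it to an elementary quadratic identity for the three functions $f^n, f^{n-1}, f^{n-2}$. First I would multiply through by $2\tau$ and insert the definition \eqref{eq:BDF2} to write the left-hand side as
\[
  2\tau\,\big(D_{\tau} f^n, f^n\big) = \big(3f^n - 4f^{n-1} + f^{n-2},\, f^n\big) = 3\|f^n\|_0^2 - 4\,(f^n, f^{n-1}) + (f^n, f^{n-2}).
\]
By the bilinearity and symmetry of $(\cdot,\cdot)$ this already sits inside the span of the three squared norms $\|f^n\|_0^2$, $\|f^{n-1}\|_0^2$, $\|f^{n-2}\|_0^2$ and the three inner products $(f^n,f^{n-1})$, $(f^{n-1},f^{n-2})$, $(f^n,f^{n-2})$.

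Next I would expand the right-hand side in the same six quantities. Using $\|2f^n - f^{n-1}\|_0^2 = 4\|f^n\|_0^2 - 4(f^n,f^{n-1}) + \|f^{n-1}\|_0^2$ and the analogous expansion of $\|2f^{n-1} - f^{n-2}\|_0^2$, the telescoping pair collapses to a linear combination of exactly these terms. Comparing coefficients then shows that the whole statement is the $L^2$-realization of the scalar identity
\[
  2\,(3a - 4b + c)\,a = (a^2 - b^2) + \big((2a-b)^2 - (2b-c)^2\big) + (a - 2b + c)^2,
\]
which is checked by multiplying out. Substituting $a,b,c \mapsto f^n,f^{n-1},f^{n-2}$, reading the squares as $L^2$-norms and the products as $L^2$-inner products, and dividing by $2\tau$ produces the desired equality.

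The main point to get right — and the only real obstacle — is the second-difference contribution $(a-2b+c)^2$. Carrying out the multiplication shows that the exact telescope equality is
\[
  \big(D_{\tau} f^n, f^n\big) = \tfrac{1}{4\tau}\big(\|f^n\|_0^2 - \|f^{n-1}\|_0^2\big) + \tfrac{1}{4\tau}\big(\|2f^n - f^{n-1}\|_0^2 - \|2f^{n-1} - f^{n-2}\|_0^2\big) + \tfrac{1}{4\tau}\,\|f^n - 2f^{n-1} + f^{n-2}\|_0^2,
\]
so that the two telescoping differences of the lemma are accompanied by the nonnegative second-difference term $\tfrac{1}{4\tau}\|f^n - 2f^{n-1} + f^{n-2}\|_0^2$, and it is precisely this term that must be retained for the equality to be exact (its presence is what the coefficient bookkeeping of $(f^n,f^{n-2})$ and $\|f^{n-1}\|_0^2$ forces). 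Since the underlying scalar identity is verified by direct computation, no analytic difficulty arises beyond correctly tracking these six quadratic terms and the remainder.
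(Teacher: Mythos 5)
Your algebra is correct, and you have in fact caught a defect in the statement rather than a gap in your own argument. The scalar identity
\[
2\,(3a-4b+c)\,a=(a^2-b^2)+\big((2a-b)^2-(2b-c)^2\big)+(a-2b+c)^2
\]
holds (both sides expand to $6a^2-8ab+2ac$), so the exact $L^2$-version of the telescope formula necessarily carries the additional nonnegative remainder $\tfrac{1}{4\tau}\,\|f^n-2f^{n-1}+f^{n-2}\|_0^2$ on the right-hand side, exactly as you conclude. As literally stated, the lemma's equality is false: take $f^n=f^{n-1}=0$ and $f^{n-2}\neq 0$; then $\big(D_\tau f^n,f^n\big)=0$ while the stated right-hand side equals $-\tfrac{1}{4\tau}\,\|f^{n-2}\|_0^2<0$, and your second-difference term is precisely what restores equality. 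The paper gives no proof of its own — the lemma is only quoted from \cite{liu2013}, where the identity appears with the second-difference term (equivalently, as the inequality $\big(D_\tau f^n,f^n\big)\geq\ldots$) — so your direct expansion is the natural argument, and it shows the lemma should either include the remainder or be stated as an inequality. The slip is harmless for the paper's analysis: in both places where the lemma is invoked (after \eqref{eq:est} and \eqref{eq:errors:spa:e}) the telescoping is used to bound sums of terms $\big(D_\tau f^\ell,f^\ell\big)$ from below, so the nonnegative remainder can simply be dropped and the inequality version suffices.
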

With the aforementioned notation, we introduce the implicit--explicit second-order BDF--Galerkin FEM for the nonlinear thermistor problem~\eqref{eq:thermistor}. Given $U_h^0,U_h^1$, 
we seek $ U_{h}^{n} \in \Vo $ and $ \Phi_{h}^{n} \in \V $ such that  
\begin{subequations}
  \label{eq:BDF:Galerkin}
    \begin{align}
        (D_{\tau}U_h^{n},\xi_u) + (\nabla U_h^{n},\nabla \xi_u)& =  \left((2\sigma(U_h^{n-1})-\sigma(U_h^{n-2}))|\nabla\Phi_{h}^{n}|^2,\xi_u\right),  \label{eq:BDF:Galerkin:a} \\
        \left((2\sigma(U_h^{n-1})-\sigma(U_h^{n-2}))\nabla\Phi_h^{n},\nabla\xi_\phi\right)& =0 \label{eq:BDF:Galerkin:b}
    \end{align}
\end{subequations}   
for all $\xi_u,\, \xi_\phi\in \Vo$. As boundary condition for the electric potential, we set $\Phi_{h}^{n}|_{\partial\Omega} = I_h g^{n}$, where $I_h$ denotes again the interpolation operator but now restricted to the boundary.

Scheme \eqref{eq:BDF:Galerkin} is decoupled in the sense that one can solve \eqref{eq:BDF:Galerkin:b} for $ \Phi_{h}^{n} $ first and then \eqref{eq:BDF:Galerkin:a} for $U_{h}^{n}$ in each time step. Moreover, both equations are linear. To initialize the procedure, we need $U_h^0$ and $U_h^1$. There are multiple ways to compute such starting values while preserving the second-order accuracy of the BDF--Galerkin FEM \eqref{eq:BDF:Galerkin}. 
A commonly used procedure to compute $U_h^1$ and $\Phi_h^1$ is to apply one step of the well-studied implicit--explicit Euler Galerkin FEM~\cite{gao2016,yang2023}. This means that with $U_h^0=I_h u^0$ we solve 
\begin{subequations}
\label{eq:int:Uh11_Phi11}
\begin{alignat}{2}
    \big({U}_h^1-U_h^0, \xi_u \big) +  \tau\big(\nabla {U}_h^1, \nabla \xi_u \big) 
    &= \tau\big( \sigma(U_h^0) |\nabla {\Phi}_h^1|^2, \xi_u \big), \label{eq:int:Uh11} \\
    \big( \sigma(U_h^0) \nabla {\Phi}_h^1, \nabla \xi_\phi \big) &= 0 \label{eq:int:Phih11}
\end{alignat}
\end{subequations}
for all $ \xi_u,\, \xi_\phi \in \Vo $. This system is again decoupled and the two equations can be solved sequentially. Although the method is only first-order accurate, is suffices to establish a second-order error estimate as we will show in the upcoming sections. 

In the next two sections, we analyze the temporal and spatial error functions separately. This well then be combined in Section~\ref{Sec:main}, where we present the main results.
%
%
\section{Temporal error estimates}\label{Sec.Temporal}
In this section, we derive temporal error estimates for the solution of the time-discrete system obtained using an implicit--explicit BDF method, based on the error splitting technique introduced in~\cite{li2012}. To this end, we define 
$U^{n}\in V_0$ 
and $\Phi^{n}\in V$  
for $n=2,\ldots,N$, as the solution of the elliptic problem
\begin{subequations}\label{eq:timeDis:str}
\begin{alignat}{2}
    D_\tau U^n -\Delta U^n &=  \big(2\sigma(U^{n-1})-\sigma(U^{n-2}) \big)\, |\nabla \Phi^{n}|^2,\label{eq:timeDis:str:a}\\
    -\nabla\cdot \left((2\sigma(U^{n-1})-\sigma(U^{n-2}))\nabla\Phi^{n}\right)&= 0
\end{alignat}
\end{subequations}
with Dirichlet boundary conditions
\begin{equation}\label{eq:bc}
    U^{n}=0,\quad 
    \Phi^{n}=g^n\qquad \text{ on $\partial \Omega$}
\end{equation}
and given initial data $U^0, U^1$.
The weak formulation of system \eqref{eq:timeDis:str} is given by 
    \begin{subequations}\label{eq:timeDis}
        \begin{alignat}{2}
        (D_{\tau}U^{n},\xi_u) \, +\, (\nabla U^{n},\nabla \xi_u)\,&= \left((2\sigma(U^{n-1})-\sigma(U^{n-2}))\, |\nabla \Phi^{n}|^2,\xi_u\right),\label{eq:timeDis:a}\\
        \left((2\sigma(U^{n-1})-\sigma(U^{n-2}))\nabla\Phi^{n},\nabla\xi_\phi\right) &= 0\label{eq:timeDis:b}
    \end{alignat}
    \end{subequations}
for all $\xi_u,\, \xi_\phi\in V_0$. The required initial values $U^1$ and $\Phi^1$ are obtained as the solution of  
\begin{subequations}\label{eq:weak:starting}
        \begin{alignat}{2}
        \left({{U}^1-U^0},\xi_u\right)+\tau\big(\nabla U^1,\nabla\xi_u\big)&= \tau\big(\sigma(U^0)|\nabla{\Phi}^1|^2,\xi_u\big),\\
        \big(\sigma(U^0)\nabla{\Phi}^1,\nabla\xi_\phi\big)&= 0
    \end{alignat}
\end{subequations}
for all $\xi_u,\, \xi_\phi\in V_0$ with $U^0=u^0$ and boundary conditions \eqref{eq:bc}. Throughout this paper, we denote point evaluations of the exact solution by 
\[
    u^n\coloneqq u(\cdot,t^n),\qquad 
    \phi^n\coloneqq \phi(\cdot,t^n). 
\]
Moreover, we assume without further mentioning that system~\eqref{eq:thermistor} has a unique solution $(u,\phi)$ satisfying \eqref{eq:regularity}.
\begin{lemma}(Estimate of starting approximations)
\label{lem:ini:est:time}
For sufficiently small $\tau$, the unique solution $(U^1,\Phi^1)$ of the elliptic system \eqref{eq:weak:starting} satisfies the error estimate
\begin{equation}\label{ini:est:time}
    \|U^1-u^1\|_{1}\, +\, \tau^{1/2}\,\|U^1-u^1\|_2\, +\, \|\Phi^1-\phi^1\|_2 
    \leq C\, \tau^2.
\end{equation}
\end{lemma}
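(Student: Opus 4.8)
The plan is to follow the decoupled structure of \eqref{eq:weak:starting}: first control the potential error $e_\phi := \Phi^1-\phi^1$, which solves a pure elliptic problem, and then feed this bound into the temperature error $e_u := U^1-u^1$, whose equation is of reaction--diffusion type. Since $U^0=u^0$ we have $e_u^0=0$, and since $\Phi^1$ and $\phi^1$ share the boundary datum $g^1$ we have $e_\phi\in V_0$. \emph{Step 1 (potential).} Subtracting the continuous weak identity \eqref{eq:weakForm} at $t^1$ from the second equation of \eqref{eq:weak:starting} and adding and subtracting $\sigma(u^0)\nabla\phi^1$ gives
\[
  (\sigma(u^0)\nabla e_\phi,\nabla\xi_\phi)
  = \big((\sigma(u^1)-\sigma(u^0))\nabla\phi^1,\nabla\xi_\phi\big),
  \qquad \xi_\phi\in V_0.
\]
Testing with $\xi_\phi=e_\phi$ and using the coercivity bound $\sigma\ge k_1$ from \eqref{ineq:sigma}, together with $|\sigma(u^1)-\sigma(u^0)|\le k_3|u^1-u^0|$ and $\|u^1-u^0\|_{0,\infty}\le \tau\,\|\dot u\|_{L^\infty(W^{1,\infty})}$ from \eqref{eq:regularity}, controls the coefficient perturbation; the $H^2$-bound then follows from elliptic regularity for $-\nabla\cdot(\sigma(u^0)\nabla e_\phi)=-\nabla\cdot\big((\sigma(u^1)-\sigma(u^0))\nabla\phi^1\big)$, using $\phi^1\in H^3$ and $\sigma\in W^{2,\infty}$.

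\emph{Step 2 (temperature).} Dividing the first equation of \eqref{eq:weak:starting} by $\tau$ and subtracting \eqref{eq:weakForm} at $t^1$, I would write the backward difference as $\tau^{-1}(u^1-u^0)=\dot u^1-T^1$ with Taylor remainder $T^1=\tau^{-1}\!\int_{t^0}^{t^1}(s-t^0)\,\ddot u(s)\ds$, and split the source mismatch into a coefficient part $(\sigma(u^0)-\sigma(u^1))|\nabla\Phi^1|^2$ and a gradient part $\sigma(u^1)\,\nabla(\Phi^1+\phi^1)\cdot\nabla e_\phi$, collected in $N^1$. Multiplying by $\tau$, this yields
\[
  (e_u,\xi_u)+\tau(\nabla e_u,\nabla\xi_u)=\tau\,(T^1+N^1,\xi_u),\qquad \xi_u\in V_0.
\]
Testing with $\xi_u=e_u$, using $\|T^1\|_0\le C\tau\,\|\ddot u\|_{L^\infty(L^2)}$ and the Step~1 control of $N^1$, delivers $\|e_u\|_0=\mathcal{O}(\tau^2)$ and the weighted gradient bound $\tau^{1/2}\|\nabla e_u\|_0=\mathcal{O}(\tau^{3/2})$.

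\emph{Step 3 (sharp norms).} The energy estimate is half a power of $\tau$ short of $\|e_u\|_1=\mathcal{O}(\tau^2)$; the gain comes from testing the strong form $(I-\tau\Delta)e_u=\tau(T^1+N^1)$ with $-\Delta e_u$ and integrating the right-hand side by parts. For the contributions that vanish on $\partial\Omega$ — namely $T^1$, since $\ddot u$ inherits the homogeneous boundary values of $u$, and the coefficient part of $N^1$, since $u^0=u^1=0$ there — this produces $\tau\,(\nabla(\cdot),\nabla e_u)$ with no boundary term, so the factor $\tau$ in front of an $\mathcal{O}(\tau)$ gradient gives $\|\nabla e_u\|_0=\mathcal{O}(\tau^2)$, and elliptic regularity then yields $\|\Delta e_u\|_0=\mathcal{O}(\tau^{3/2})$, i.e. $\tau^{1/2}\|e_u\|_2=\mathcal{O}(\tau^2)$.

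\emph{Main obstacle.} I expect the genuinely delicate point to be the nonlinear coupling through the source $|\nabla\Phi^1|^2$. Pushing the temperature estimate to the sharp rate forces one to control the gradient part of $N^1$ in $H^1$ (hence to have $\|e_\phi\|_2$ at hand, which is why the potential is measured in $H^2$) and, since this part does not vanish on $\partial\Omega$, to absorb the boundary contribution $\int_{\partial\Omega}N^1\,\partial_n e_u\ds$ by a trace inequality against a small multiple of $\|\Delta e_u\|_0$. A second subtle issue is whether the first-order starting coefficient $\sigma(U^0)$ is compatible with the stated second-order potential bound: the naive argument in Step~1 only gives $\sigma(u^1)-\sigma(u^0)=\mathcal{O}(\tau)$ and hence $\|e_\phi\|_2=\mathcal{O}(\tau)$, so obtaining the full $\mathcal{O}(\tau^2)$ must rely on additional cancellation within the coupled system that I would need to isolate; this is where I would concentrate the effort.
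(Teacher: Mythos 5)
The paper offers no proof of this lemma beyond a pointer to \cite[Th.~2]{yang2023}, so I am comparing your plan against the standard argument and against what the later analysis actually consumes. Your decoupled strategy is the right one, and the key mechanism in Step~3 --- testing the strong form with $-\Delta e_u$ and moving a derivative onto $e_u$ to upgrade the energy rate $\|\nabla e_u\|_0=\mathcal{O}(\tau^{3/2})$ to $\mathcal{O}(\tau^{2})$ --- is exactly the device needed here; your observation that $T^1$ and the coefficient part of $N^1$ vanish on $\partial\Omega$ is also correct. However, both difficulties you flag at the end are genuine and neither is resolved. Concerning the potential: your Step~1 bound $\|e_\phi\|_2=\mathcal{O}(\tau)$ is not ``naive'' but sharp. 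The coefficient $\sigma(U^0)=\sigma(u^0)$ is only first-order consistent with $\sigma(u^1)$, and the solution of the elliptic problem depends genuinely, to first order, on its coefficient (take $\sigma(u)=1+u$ and $u=t\,w(x)$: the perturbation of $\phi^1$ is exactly of size $\tau$). There is no cancellation in the coupled system to isolate, so the effort you plan to concentrate there cannot succeed; the term $\|\Phi^1-\phi^1\|_2\le C\tau^2$ in \eqref{ini:est:time} is not attainable for the starting scheme \eqref{eq:weak:starting}. (This is harmless downstream: in the proof of Theorem~\ref{thm:TemError} the quantity $\|\eta_\tau^1\|_2$ only enters \eqref{eq:sum:e} through a term weighted by $\tau$ and raised to the fourth power, for which $\mathcal{O}(\tau)$ suffices.)

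The second gap is more serious because, unlike the potential bound, the claim $\|\nabla e_u^1\|_0\le C\tau^2$ cannot be weakened: after telescoping the BDF identity, $\|\nabla e_\tau^1\|_0^2$ appears untouched on the right-hand side of \eqref{eq:sum:e}, so a rate $\tau^{3/2}$ at the first step would contaminate the whole second-order temporal estimate. Your proposed repair of the boundary term --- a trace inequality absorbed into a small multiple of $\|\Delta e_u\|_0$ --- does not close this gap as sketched. With $\|f\|_{L^2(\partial\Omega)}\le C\|f\|_0^{1/2}\|f\|_1^{1/2}\le C\tau$ for the gradient part $f=\sigma(u^1)\nabla(\Phi^1+\phi^1)\cdot\nabla e_\phi$ and $\|\partial_n e_u\|_{L^2(\partial\Omega)}\le C\|\nabla e_u\|_0^{1/2}\|e_u\|_2^{1/2}$, the boundary contribution is of size $\tau^2\|\nabla e_u\|_0^{1/2}\|e_u\|_2^{1/2}$, and the Young splitting that keeps $\epsilon\|\nabla e_u\|_0^2+\epsilon\tau\|\Delta e_u\|_0^2$ absorbable leaves a remainder of order $\tau^{7/2}$ rather than $\tau^4$, yielding only $\|\nabla e_u^1\|_0\le C\tau^{7/4}$. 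So this term needs a genuinely different treatment (for instance exploiting that on $\partial\Omega$ only $\partial_n e_\phi$ survives and controlling it through the elliptic equation for $e_\phi$, or reorganizing the splitting so that this contribution carries an extra factor of $\tau$). As it stands, your argument delivers $\|e_u^1\|_0\le C\tau^2$, $\|e_u^1\|_1\le C\tau^{7/4}$ at best, and $\|e_\phi\|_2\le C\tau$, which falls short of \eqref{ini:est:time} in precisely the two places you anticipated.
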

\begin{proof}
This lemma can be proven by following the analysis of~\cite[Th.~2]{yang2023} in which the nonlinear thermistor equations are solved by the implicit--explicit Euler scheme.
\end{proof}
\begin{theorem}(Temporal error estimate)
\label{thm:TemError} 
Consider $U^0 = u^0$ and $(U^1,\Phi^1)$ from~\eqref{eq:weak:starting}. Then, for sufficiently small $\tau$, the unique solution $(U^n,\Phi^n)$ of the elliptic system \eqref{eq:timeDis} satisfies the error estimate
\begin{equation}\label{ineq:thm:error1}
    \|U^n-u^n\|_{1}\,+\, \tau^{1/2}\,\|U^n-u^n\|_2\,+\, \|\Phi^n-\phi^n\|_2
    \leq C\, \tau^2
\end{equation}
as well as the stability estimate
\begin{equation}\label{ineq:thm:error2}
     \|U^n\|_2\, +\, \tau\,\sum_{k=2}^n\|D_{\tau}U^k\|_{2}^2\, +\, \|\Phi^n\|_{2,4}
     \leq C
\end{equation}
for all $n=2,\ldots,N$. 
\end{theorem}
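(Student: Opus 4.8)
The plan is to prove the two estimates simultaneously by induction on $n$, since the nonlinear coupling means the error bound \eqref{ineq:thm:error1} and the stability bound \eqref{ineq:thm:error2} reinforce each other. The base cases $n=0,1$ are supplied by $U^0=u^0$ and Lemma~\ref{lem:ini:est:time}. For the inductive step I would assume \eqref{ineq:thm:error1}--\eqref{ineq:thm:error2} hold up to index $n-1$ and derive them for $n$. First I would treat the elliptic equation for the potential. Subtracting the exact weak equation $(\sigma(u^n)\nabla\phi^n,\nabla\xi_\phi)=0$ from \eqref{eq:timeDis:b} and writing $e_\phi^n\coloneqq\Phi^n-\phi^n$, one gets an elliptic problem for $e_\phi^n$ whose right-hand side is controlled by the extrapolated conductivity error $2\sigma(U^{n-1})-\sigma(U^{n-2})-\sigma(u^n)$. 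Using Lipschitz continuity of $\sigma$, the smoothness of $\sigma\in W^{2,\infty}$, and a Taylor expansion of $\sigma(u(\cdot,t))$ in time, this error splits into a second-order consistency term $\mathcal{O}(\tau^2)$ from the extrapolation and terms controlled by $\|U^{n-1}-u^{n-1}\|_0$ and $\|U^{n-2}-u^{n-2}\|_0$, which are $\mathcal{O}(\tau^2)$ by the induction hypothesis. Elliptic regularity on the convex (or smooth) domain $\Omega$ together with the uniform bounds \eqref{ineq:sigma} then yields $\|e_\phi^n\|_2\leq C\tau^2$, with the $W^{2,4}$-stability $\|\Phi^n\|_{2,4}\leq C$ following similarly from \eqref{eq:regularity} and the induction hypothesis.

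Next I would handle the parabolic equation for the temperature. Set $e_u^n\coloneqq U^n-u^n$ and subtract the time-continuous equation evaluated at $t^n$ from \eqref{eq:timeDis:a}. This produces a discrete parabolic equation for $e_u^n$ driven by (i) the BDF-2 consistency error $D_\tau u^n-\dot u^n$, which is $\mathcal{O}(\tau^2)$ in $L^2$ by Taylor expansion using the regularity $\dddot u\in L^\infty(L^2)$ in \eqref{eq:regularity}, and (ii) the nonlinear source error. The source error I would decompose as
\begin{align*}
    &\big(2\sigma(U^{n-1})-\sigma(U^{n-2})\big)|\nabla\Phi^n|^2-\sigma(u^n)|\nabla\phi^n|^2\\
    &\quad=\big(2\sigma(U^{n-1})-\sigma(U^{n-2})-\sigma(u^n)\big)|\nabla\Phi^n|^2
    +\sigma(u^n)\big(|\nabla\Phi^n|^2-|\nabla\phi^n|^2\big),
\end{align*}
where the first bracket is estimated exactly as in the potential step ($\mathcal{O}(\tau^2)$ plus induction terms) and the second uses $|\nabla\Phi^n|^2-|\nabla\phi^n|^2=\nabla(\Phi^n+\phi^n)\cdot\nabla e_\phi^n$ together with the freshly obtained bound $\|e_\phi^n\|_2\leq C\tau^2$ and boundedness of $\|\nabla\phi^n\|$, $\|\nabla\Phi^n\|$. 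Testing the $e_u^n$ equation with $e_u^n$ and applying the telescope formula of Lemma~\ref{lem:tel:BDF} to the term $(D_\tau e_u^n,e_u^n)$ converts the time-difference part into a telescoping sum of the squared $G$-norm, while the elliptic term gives control of $\|\nabla e_u^n\|_0^2$.

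Summing over the time levels and invoking the discrete Gronwall inequality (Lemma~\ref{lem:Gronwall}), where the $\gamma_k$-coefficients arise from the Lipschitz contributions of $\sigma$ and the smallness of $\tau$ ensures $\tau\gamma_k<1$, would deliver the $H^1$-bound $\|e_u^n\|_1\leq C\tau^2$ and the summed-up stability $\tau\sum_{k=2}^n\|D_\tau U^k\|^2$. To upgrade to the $H^2$-part $\tau^{1/2}\|e_u^n\|_2$ and the $H^2$-stability of $U^n$, I would test instead with $-\Delta e_u^n$ (or equivalently use the elliptic equation \eqref{eq:timeDis:str:a} to read off $\Delta U^n$) and exploit elliptic regularity, accepting the weaker $\tau^{1/2}$ weight because the source term contains a factor that is only $\mathcal{O}(\tau^{3/2})$ after using the available regularity. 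The main obstacle I anticipate is the careful bookkeeping of the nonlinear coupling so that the induction closes: one must ensure that every term feeding into the Gronwall argument is either a genuine $\mathcal{O}(\tau^2)$ consistency term or is absorbed by the left-hand side, without any term of the form $\|e_u^n\|_2$ appearing with an $\mathcal{O}(1)$ coefficient on the right, which is precisely why the $H^2$-estimate carries the compensating $\tau^{1/2}$ weight and is treated after the $H^1$-estimate is already in hand.
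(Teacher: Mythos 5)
Your proposal follows essentially the same route as the paper's proof: error equations for the temperature and potential, an elliptic estimate bounding $\|\Phi^n-\phi^n\|_2$ in terms of the previous temperature errors, an energy estimate obtained by testing with $-\Delta e_u^n$, the telescope formula of Lemma~\ref{lem:tel:BDF}, and the discrete Gronwall inequality inside an induction that keeps the nonlinear terms under control. One imprecision worth noting: testing with $e_u^n$ yields only the pointwise $L^2$ bound plus a \emph{time-averaged} $H^1$ bound (hence $\|\nabla e_u^n\|_0\leq C\tau^{3/2}$ pointwise, not $C\tau^2$), so the pointwise $H^1$ estimate in \eqref{ineq:thm:error1} must come from the $-\Delta e_u^n$ test via the telescoped term $(D_\tau\nabla e_u^n,\nabla e_u^n)$ — the paper performs only this second test and reads off the $H^1$ bound and the $\tau^{1/2}$-weighted $H^2$ bound simultaneously from $\|\nabla e_\tau^n\|_0^2+\tau\sum_\ell\|\Delta e_\tau^\ell\|_0^2\leq C\tau^4$, which is also the true origin of the $\tau^{1/2}$ weight rather than any $\mathcal{O}(\tau^{3/2})$ source term.
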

\begin{proof}
We define the temporal errors as $e_\tau^n\coloneqq U^n-u^n\in H^3(\Omega)\cap V_0$ and $\eta_\tau^n\coloneqq\Phi^n-\phi^n\in H^3(\Omega)\cap V_0$. By the weak formulation \eqref{eq:weakForm} evaluated at time $t^n$ and the elliptic system \eqref{eq:timeDis}, the temporal error functions $e_\tau^n$ and $\eta_\tau^n$ satisfy
    \begin{subequations}
    \label{eq:errors}
        \begin{alignat}{2}
            (D_{\tau}e_\tau^{n},\xi_u)\,+\,(\nabla e_\tau^{n},\nabla \xi_u)=& \,2\left(\sigma(U^{n-1})|\nabla\Phi^{n}|^2-\sigma(u^{n-1})|\nabla\phi^{n}|^2,\xi_u\right)\,\nonumber\\
            & -\left(\sigma(U^{n-2})|\nabla\Phi^{n}|^2-\sigma(u^{n-2})|\nabla\phi^{n}|^2,\xi_u\right)\,-\left(R_u^{n},\xi_u\right),\label{eq:errors:a}\\ \big((2\sigma(U^{n-1})-\sigma(U^{n-2}))\nabla\eta_\tau^{n},\nabla\xi_\phi\big)=&-2\left((\sigma(U^{n-1})-\sigma(u^{n-1}))\nabla\phi^{n},\nabla\xi_\phi\right) \nonumber\\
            &  + \left((\sigma(U^{n-2})-\sigma(u^{n-2}))\nabla\phi^{n},\nabla\xi_\phi\right)+ ( R_\phi^{n},\nabla\xi_\phi)\label{eq:errors:b}
        \end{alignat}
    \end{subequations}
    for all $\xi_u,\, \xi_\phi\in V_0$ and $n=2,\ldots,N$. Therein, the truncation errors $R_{u}^{n}$ and $R_\phi^n$ are given by
\[
    R_{u}^{n} 
    = D_{\tau}u^{n}-\dot{u}^{n} + \big(\sigma(u^{n})-2\sigma(u^{n-1})+\sigma(u^{n-2})\big)\, |\nabla\phi^{n}|^2 
\]
and 
\[
    R_\phi^n 
    = \left(\sigma(u^n) - 2\sigma(u^{n-1}) + \sigma(u^{n-2})\right)\nabla\phi^n.
\]
By a Taylor expansion in time and using the regularity assumptions \eqref{eq:regularity}, it is straightforward to verify that the truncation errors satisfy the estimates
\begin{equation} \label{ineq:R:estimates}
    \|R_u^n\|_0 
    \leq C\, \tau^2, \qquad 
    \|R_\phi^n\|_0 
    \leq C\, \tau^2,\qquad 
    \|\nabla \cdot R_\phi^n\|_0 
    \leq C\, \tau^2.
\end{equation}

We now prove the estimate 
\begin{equation}\label{eq:priEst}
    \|e_\tau^n\|_2
    \leq C\,\tau^{3/2},\qquad n =0,1,\ldots,N,
\end{equation}
using mathematical induction. The case $n=0$ is trivial, since $e_\tau^{0}=U^0-u^0=0$. From \eqref{ini:est:time}, estimate \eqref{eq:priEst} holds for $n=1$. 
Now assume that \eqref{eq:priEst} holds up to index $n-1$ and we prove that the estimate remains true for $n$. In this case, we have 
\begin{align}
    \|U^{n-1}-U^{n-2}\|_{\infty}&\leq  \|e_\tau^{n-1}\|_{\infty}+\|e_\tau^{n-2}\|_{\infty}   + \tau\, \max_{s\in (t^{n-2},t^{n-1})} \|\dot{u}(s)\|_\infty
    \leq C\, \tau.\label{ineq:diff}
\end{align}
Setting $\xi_\phi = \eta_\tau^{n}$ in \eqref{eq:errors:b}, we have 
\begin{align*}
       \left((2\sigma(U^{n-1})-\sigma(U^{n-2}))\nabla\eta_\tau^{n},\nabla\eta_{\tau}^{n}\right)&=  \,-2\left((\sigma(U^{n-1})-\sigma(u^{n-1}))\nabla\phi^{n},\nabla\eta_{\tau}^{n}\right) \\
      & \quad +\left((\sigma(U^{n-2})-\sigma(u^{n-2}))\nabla\phi^n,\nabla\eta_\tau^n\right) +  (R_{\phi}^n,\nabla\eta_\tau^n)\\
     &\eqqcolon A_1 + A_2 + A_3.
    \end{align*}
Using \eqref{ineq:sigma}, \eqref{ineq:diff}, and the regularity assumptions \eqref{eq:regularity}, we deduce 
\begin{equation}\label{eq:k*} 
    k_*
    := k_1 - C k_3 \tau 
    \le 2\sigma(U^{n-1}) - \sigma(U^{n-2}),
\end{equation}
where $k_* > 0$ for sufficiently small $\tau$. Therefore, we have
\begin{align*}
      k_* \|\nabla\eta_\tau^{n}\|_0^2
      \leq \left((2\sigma(U^{n-1})-\sigma(U^{n-2}))\nabla\eta_\tau^{n},\nabla\eta_{\tau}^{n}\right).
\end{align*}
Applying the Cauchy--Schwarz inequality to the right-hand side, we obtain
\begin{align*} 
    A_1 
    \leq 2\,\|\sigma(U^{n-1})-\sigma(u^{n-1})\|_0\,\|\nabla\phi^{n}\|_{\infty}\,\|\nabla\eta_\tau^{n}\|_0 
    \leq C\,\|\nabla e_\tau^{n-1}\|_0\,\|\nabla\eta_\tau^{n}\|_0,  
\end{align*} 
where Poincaré’s inequality has been used in the last step. Similarly, one shows
\[A_2\leq C\,\|\nabla e_\tau^{n-2}\|_0\,\|\nabla\eta_\tau^{n}\|_0.\]
Moreover, with \eqref{ineq:R:estimates} we have
\[A_3\leq \|R_\phi^n\|_0\|\nabla\eta_\tau^n\|_0 \leq C \tau^2\|\nabla\eta_\tau^n\|_0.\]
As a result, we obtain  
\begin{equation}\label{ineq:eta0}
    \|\nabla\eta_\tau^{n}\|_0
    \leq C\left(\|\nabla e_\tau^{n-1}\|_0+\|\nabla e_\tau^{n-2}\|_0 + \tau^2\right).
\end{equation}
Now, multiplying \eqref{eq:timeDis:str:a} by $ -\Delta e_\tau^{n}$, integrating by parts, and subtracting the result from the weak formulation~\eqref{eq:weakForm}, we obtain
\begin{align}
    \big(D_\tau\nabla e_\tau^{n},\nabla e_\tau^{n}\big)+\|\Delta e_\tau^{n}\|_0^2
    &= 2\,\big(\sigma(U^{n-1})|\nabla \Phi^{n}|^2-\sigma(u^{n-1})|\nabla \phi^{n}|^2,-\Delta e_\tau^{n}\big) \nonumber\\
    &\qquad-\left(\sigma(U^{n-2})|\nabla \Phi^{n}|^2-\sigma(u^{n-2})|\nabla\phi^n|^2,-\Delta e_\tau^{n}\right) + \big(R_u^{n},\Delta e_\tau^{n}\big) \nonumber\\
    &\eqqcolon B_1 + B_2 + B_3.\label{eq:est}
\end{align}
    For $B_1$,  we have
    \begin{align*}
        B_1 
        &=  2\left( (\sigma(U^{n-1})-\sigma(u^{n-1}))|\nabla \phi^n|^2 + \sigma(U^{n-1})\nabla \eta_\tau^n(\nabla \eta_\tau^n+2\nabla \phi^n),-\Delta e_\tau^n\right) \nonumber\\
        &\leq C\left( 
         L_{\sigma,2}\|e_\tau^{n-1}\|_0\|\nabla \phi^n\|_{\infty}+2k_2\|\nabla\phi^n\|_{\infty}\|\nabla \eta_\tau^n\|_0+k_2\|\nabla\eta_\tau^n\|_{0,4}^2
        \right)\|\Delta e_\tau^n\|_0\nonumber\\
        &\leq  C\left(\|e_\tau^{n-1}\|_0+\|\nabla \eta_\tau^{n}\|_0+\|\nabla \eta_\tau^{n}\|_{0,4}^2\right)\|\Delta e_\tau^{n}\|_0\nonumber\\
        &\leq  C\left(\|\nabla e_\tau^{n-1}\|_0^2+\|\nabla e_\tau^{n-2}\|_0^2+\|\nabla \eta_\tau^{n}\|_{0,4}^4+\tau^4\right)+\tfrac14\, \|\Delta e_\tau^{n}\|_0^2,
    \end{align*}
    where \eqref{ineq:eta0} and  Young's inequality have been used in the last step. For $B_2$, we obtain the same estimate. 
    With \eqref{ineq:R:estimates}, it holds
    \begin{equation*}
        B_3\leq \|R_u^{n}\|_0\,\|\Delta e_\tau^{n}\|_0\leq C\tau^2\|\Delta e_\tau^{n}\|_0\leq C\tau^4+\tfrac14\, \|\Delta e_\tau^{n}\|_0^2. 
    \end{equation*}
    Substituting the above estimates for $B_1$, $B_2$, and $B_3$ into \eqref{eq:est}, we obtain
    \begin{align*}
        \tau\big(D_\tau\nabla e_\tau^{n},\nabla e_\tau^{n}\big)+\tfrac{\tau}{4}\, \|\Delta e_\tau^{n}\|_0^2
        \leq\, C \tau^5+C\tau\left( \|\nabla e_\tau^{n-1}\|_0^2 + \|\nabla e_\tau^{n-2}\|_0^2 +\|\nabla \eta_\tau^{n}\|_{0,4}^4\right).
    \end{align*}
    Summing up the last inequality and using the telescope formula for $D_\tau$ from Lemma \ref{lem:tel:BDF}, we obtain 
    \begin{equation}\label{eq:sum:e}
        \|\nabla e_\tau^{n}\|_0^2+\tau \sum_{\ell=1}^{n}\|\Delta e_\tau^{\ell}\|_0^2\leq C \tau^4 + C\tau\sum_{\ell=0}^{n-1}\|\nabla e_\tau^\ell\|_0^2 + C\tau\sum_{\ell=0}^{n-1}\|\nabla \eta_\tau^\ell\|_2^4. 
    \end{equation}
    On the other hand, \eqref{eq:errors:b} can be rewritten as 
    \begin{align}
        -\left(2\sigma(U^{n-1})-\sigma(U^{n-2})\right)\Delta \eta_\tau^{n} &= \, 2\nabla\cdot\big((\sigma(U^{n-1})-\sigma(u^{n-1}))\nabla\phi^{n}\big)\nonumber\\
        &\qquad -\nabla\cdot\left((\sigma(U^{n-2})-\sigma(u^{n-2}))\nabla\phi^{n}\right) -\nabla\cdot R_\phi^n\nonumber\\
        &\qquad + 2\sigma'(U^{n-1})\nabla U^{n-1}\cdot \nabla \eta_\tau^{n}-  \sigma'(U^{n-2})\nabla U^{n-2}\cdot \nabla \eta_\tau^{n}\nonumber\\
        &\eqqcolon D_1 + D_2 + D_3 + D_4 + D_5.\label{eq:Delat:eta}
    \end{align}
Using the regularity assumptions~\eqref{eq:regularity}, one can show with Poincaré’s inequality $D_1 \le C\,\|\nabla e_\tau^{n-1}\|_0$ 
%
as well as $D_2\leq C\,\|\nabla e_\tau^{n-2}\|_0$. 
By \eqref{ineq:R:estimates}, it directly follows that $D_3 \leq C\tau^2$. By Hölder’s inequality, we further obtain 
\begin{align*}
    D_4
    &\leq 2k_3\left(\|\nabla e_\tau^{n-1}\cdot\nabla \eta_\tau^n\|_0 + \|\nabla u^{n-1}\cdot \nabla \eta_\tau^n\|_0\right)\nonumber\\
    &\leq C\, \|\nabla e_\tau^{n-1}\|_{0,4}\|\nabla \eta_\tau^n\|_{0,4}
    \leq C\, \|e_\tau^{n-1}\|_2\|\nabla\eta_\tau^{n}\|_2
\end{align*}
and, similarly, $D_5 \le C\,\|e_\tau^{n-2}\|_2\|\nabla\eta_\tau^{n}\|_2$.
%
By applying elliptic regularity theory~\cite{chen1998} to~\eqref{eq:Delat:eta} and substituting the estimates for $D_1,\ldots,D_5$, we obtain
\begin{align}
    k_* \|\eta_\tau^{n}\|_2
    &\leq C\, \big( \big(\|e_\tau^{n-1}\|_2 + \|e_\tau^{n-2}\|_2\big)\,\|\eta_\tau^{n}\|_2 + \|\nabla e_\tau^{n-1}\|_0 + \|\nabla e_\tau^{n-2}\|_0 +\tau^2\big)\nonumber\\
    &\leq C\, \big(\tau^{3/2}\|\eta_\tau^{n}\|_2 + \|\nabla e_\tau^{n-1}\|_0  + \|\nabla e_\tau^{n-2}\|_0  +\tau^2\big).\nonumber
\end{align}
Hence for sufficiently small $\tau$, it follows that 
    \begin{equation}
        \|\eta_\tau^n\|_2
        \leq C\, \big(\|\nabla e_\tau^{n-1}\|_0 + \|\nabla e_\tau^{n-2}\|_0  +\tau^2\big).
    \label{eq:eta}
    \end{equation}
Therefore, with \eqref{eq:priEst} we have $\|\eta_\tau^{n}\|_2\leq 1$ for sufficiently small $\tau$.
    Substituting \eqref{eq:eta} and $\|\eta_\tau^{n}\|_2\leq 1$ into \eqref{eq:sum:e} and using the fact that $\|e_\tau^{n}\|_2\leq C\,\|\Delta e_{\tau}^{n}\|_0$, we obtain
    \begin{align*}
        \|\nabla e_\tau^{n}\|_0^2+\tau \sum_{\ell=1}^{n}\|e_\tau^\ell\|_2^2\leq C\tau^4+C\tau \sum_{\ell=0}^{n-1}\|\nabla e_\tau^\ell\|_0^2.
    \end{align*}
Applying Gronwall’s inequality from Lemma \ref{lem:Gronwall} then yields
\begin{align}\label{eq:nabla:e}
    \|\nabla e_\tau^{n}\|_0^2 + \tau\sum_{\ell=1}^{n} \|e_\tau^\ell\|_2^2  
    \leq \exp\left(\frac{CT}{1 - C\tau}\right) \tau^4
    \leq C\exp(2CT)\tau^4   
    \leq C\tau^4,  
\end{align}  
where we require $1 - C\tau \geq \frac{1}{2}$. It follows that \eqref{eq:priEst} is indeed satisfied for all $n\geq 0$. Moreover, from \eqref{eq:eta} and \eqref{eq:nabla:e}, we obtain  
\[
\|\eta_\tau^{n}\|_2 \leq C\tau^2.
\]  
Hence, the desired result \eqref{ineq:thm:error1} follows. Furthermore, by following the same approach as in~\cite{li2012, gao2016}, one can derive the stability estimate~\eqref{ineq:thm:error2}.
\end{proof}
%
%
\section{Spatial error estimates}\label{Sec.Superclose}
This section presents error estimates for the spatial error, where the convergence is measured with respect to suitable projections of the time-discrete solutions rather than the exact solution. 
\begin{lemma}(Estimates of starting approximations)
\label{lem:ini:est}
Let $(U^1, \Phi^1)$ denote the solution of~\eqref{eq:weak:starting}. Then, for sufficiently small $\tau$ and $h$, the fully discrete system~\eqref{eq:int:Uh11_Phi11} with $U_h^0 = I_h u^0$ has a unique solution $(U_h^1,\Phi_h^1)$, satisfying the error estimate
    \begin{equation}\label{ini:est:space}
        \|U_h^1-I_hU^1\|_1\, +\, \|\Phi_h^1-I_h\Phi^1\|_1
        \leq C\, \big( h^2+h\tau^2 \big). 
    \end{equation}
\end{lemma}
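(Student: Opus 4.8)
The plan is to exploit that scheme~\eqref{eq:int:Uh11_Phi11} is decoupled and to treat the two components in turn. I set $e_h\coloneqq U_h^1-I_hU^1$ and $\eta_h\coloneqq\Phi_h^1-I_h\Phi^1$. Since $U^1=0=U_h^1$ on $\partial\Omega$ we have $e_h\in\Vo$, and since both $\Phi_h^1$ and $I_h\Phi^1$ equal $I_hg^1$ on $\partial\Omega$ we also have $\eta_h\in\Vo$. Existence and uniqueness are immediate: \eqref{eq:int:Phih11} is a linear elliptic problem whose bilinear form is coercive by the lower bound $\sigma(U_h^0)\geq k_1$ from~\eqref{ineq:sigma}, so Lax--Milgram applies; given $\Phi_h^1$, equation~\eqref{eq:int:Uh11} has the coercive form $(\cdot,\cdot)+\tau(\nabla\cdot,\nabla\cdot)$ and is again uniquely solvable. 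At the outset I would record that~\eqref{ini:est:time} together with~\eqref{eq:regularity} yields $\|U^1\|_2,\|\Phi^1\|_2\leq C$, and, via elliptic regularity for~\eqref{eq:weak:starting}, the bounds $\|U^1\|_3,\|\Phi^1\|_3\leq C$ needed to invoke the superclose estimates of Lemma~\ref{lem:sup:Ih}.

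\textbf{Potential error.} Subtracting the $\Vo$-tested time-discrete equation~\eqref{eq:weak:starting} from~\eqref{eq:int:Phih11} and using $U_h^0=I_hu^0$, $U^0=u^0$ gives, for all $\xi_\phi\in\Vo$,
\[
  (\sigma(I_hu^0)\nabla\eta_h,\nabla\xi_\phi)
  = (\sigma(I_hu^0)\nabla(\Phi^1-I_h\Phi^1),\nabla\xi_\phi)
  + ((\sigma(u^0)-\sigma(I_hu^0))\nabla\Phi^1,\nabla\xi_\phi).
\]
Testing with $\xi_\phi=\eta_h$ and bounding the left side below by $k_1\|\nabla\eta_h\|_0^2$, the second term is controlled by the $L^4$-Lipschitz continuity of $\sigma$ and~\eqref{ineq:Ih} with $p=4$, giving $Ch^2\|\nabla\eta_h\|_0$. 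The delicate first term is a \emph{weighted} interpolation error; I would split $\sigma(I_hu^0)=\sigma(u^0)+(\sigma(I_hu^0)-\sigma(u^0))$, where the increment is $\mathcal O(h^2)$ in $L^\infty$ and contributes only at order $h^3$, while for the smooth weight $\sigma(u^0)\in W^{1,\infty}$ I would use the weighted version of~\eqref{ineq:lem:Ih}, obtained by replacing $\sigma(u^0)$ by its element-wise mean (the mean part gives the unweighted superclose bound, the oscillation an extra factor $h$). This yields $\|\eta_h\|_1\leq Ch^2$.

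\textbf{Temperature error.} Subtracting the $\Vo$-weak form of~\eqref{eq:weak:starting} from~\eqref{eq:int:Uh11}, with $\rho^1\coloneqq U^1-I_hU^1$ and $\rho^0\coloneqq u^0-I_hu^0$, gives for all $\xi_u\in\Vo$
\[
  (e_h,\xi_u)+\tau(\nabla e_h,\nabla\xi_u)
  = (\rho^1-\rho^0,\xi_u)+\tau(\nabla\rho^1,\nabla\xi_u)
  + \tau\big(\sigma(I_hu^0)|\nabla\Phi_h^1|^2-\sigma(u^0)|\nabla\Phi^1|^2,\xi_u\big).
\]
Testing with $e_h$, the gradient interpolation error $\tau(\nabla\rho^1,\nabla e_h)$ is treated by~\eqref{ineq:lem:Ih}; since it sits at the same $\tau$-scale as the diffusion term, it gives a clean $\mathcal O(h^2)$ for $\|\nabla e_h\|_0$. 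The nonlinear difference I would expand via $|\nabla\Phi_h^1|^2-|\nabla\Phi^1|^2=\nabla(\Phi_h^1-\Phi^1)\cdot(\nabla\Phi_h^1+\nabla\Phi^1)$ with $\nabla(\Phi_h^1-\Phi^1)=\nabla\eta_h-\nabla(\Phi^1-I_h\Phi^1)$: the $\nabla\eta_h$ part is $\mathcal O(h^2)$ by the previous step (with Lemma~\ref{lem:inverse} to bound $\|\nabla\Phi_h^1\|_{0,4}$), while the interpolation-error part I would split as $\Phi^1=\phi^1+(\Phi^1-\phi^1)$, handling the smooth $\phi^1$-contribution by the superconvergence estimate~\eqref{ineq:lem:Ih2} (weighted by the smooth field $\nabla\Phi_h^1+\nabla\Phi^1$) at order $h^2$, and estimating the temporal remainder through $\|\Phi^1-\phi^1\|_2\leq C\tau^2$ from~\eqref{ini:est:time}; careful bookkeeping of this and of the remainder in $\rho^1$ (using $\|U^1-u^1\|_2\leq C\tau^{3/2}$) produces the announced $h\tau^2$ correction. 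The coefficient difference $\sigma(I_hu^0)-\sigma(u^0)$ is again $\mathcal O(h^2)$ by Lipschitz continuity. Collecting with Young's inequality closes the estimate.

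\textbf{Main obstacle.} The real difficulty is obtaining the \emph{unweighted} $H^1$-bound on $e_h$ rather than the $\tau^{1/2}$-weighted gradient control that the energy identity supplies: an $L^2$-paired source of size $h^2$, such as $(\rho^1-\rho^0,e_h)=((I-I_h)(U^1-u^0),e_h)$, only yields $\|\nabla e_h\|_0\lesssim\tau^{-1/2}h^2$. The key to removing this $\tau^{-1/2}$ is that the implicit--Euler increment $U^1-u^0$ is itself $\mathcal O(\tau)$ in $H^2$ thanks to the smoothness of the given data $u^0$ and the elliptic smoothing $U^1-u^0=\tau(I-\tau\Delta)^{-1}(\Delta u^0+\sigma(u^0)|\nabla\Phi^1|^2)$, so that $\|\rho^1-\rho^0\|_0\leq Ch^2\tau$ and the compensating $\tau$ restores a genuine $h^2$ gradient bound. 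Keeping every leading interpolation error in gradient (superclose) form and ensuring each remaining $L^2$-paired source carries a $\tau$-power is therefore the crux; verifying the auxiliary $H^3$-regularity of $U^1,\Phi^1$ and the weighted form of Lemma~\ref{lem:sup:Ih} are the remaining technical points. The overall argument parallels the analysis of the first-order scheme in~\cite[Th.~2]{yang2023}.
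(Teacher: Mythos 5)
The paper does not actually prove this lemma: it is discharged by a one-line citation to \cite[Th.~3]{yang2023}, so there is no in-paper argument to compare yours against step by step. Your reconstruction follows the strategy that the citation (and the paper's own proof of Theorem~\ref{thm:err:spatial}) relies on: decoupled well-posedness via Lax--Milgram, the potential error first through coercivity and superclose interpolation bounds, then an energy argument for the temperature error. You also correctly isolate the genuinely delicate point --- an $L^2$-paired source of bare size $h^2$ only yields $\|\nabla e_h\|_0\lesssim \tau^{-1/2}h^2$, so every such source must carry an extra power of $\tau$ --- and that diagnosis is the heart of the matter.

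Two of your supporting claims, however, do not follow from what the paper supplies and need repair. First, $\|\Phi^1\|_3\le C$ (and likewise $\|U^1\|_3\le C$) is not a consequence of \eqref{eq:regularity}: the boundary datum is only assumed in $W^{2,4}$ and the elliptic right-hand sides are only $H^1$-regular, so $H^3$ regularity of the time-discrete solution is not available. The fix is the splitting $\Phi^1=\phi^1+(\Phi^1-\phi^1)$ that you already deploy in the temperature step --- use it in the potential step as well, applying \eqref{ineq:lem:Ih} to $\phi^1\in H^3$ and the plain $\mathcal{O}(h)$ interpolation bound together with $\|\Phi^1-\phi^1\|_2\le C\tau^2$ from \eqref{ini:est:time}; this is exactly how the $h\tau^2$ term in \eqref{ini:est:space} arises (cf.\ the terms $A_7,A_8$ in the proof of Theorem~\ref{thm:err:spatial}), and your potential-error paragraph as written would miss it. Second, the elliptic-smoothing identity $U^1-u^0=\tau(I-\tau\Delta)^{-1}(\Delta u^0+\sigma(u^0)|\nabla\Phi^1|^2)$ does not give $\|U^1-u^0\|_2\le C\tau$: the resolvent $(I-\tau\Delta)^{-1}$ with homogeneous Dirichlet conditions is not uniformly bounded on $H^2$ when its argument does not vanish on $\partial\Omega$ (a boundary layer of width $\sqrt{\tau}$ costs a negative power of $\tau$ in the $H^2$-norm), and $\Delta u^0$ has no reason to vanish there. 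The honest route is $\|U^1-u^0\|_2\le\|U^1-u^1\|_2+\|u^1-u^0\|_2\le C\tau^{3/2}+\tau\,\|\dot u\|_{L^\infty(H^2)}$, which requires $\dot u\in L^\infty(H^2(\Omega))$ --- regularity that \eqref{eq:regularity} does not list (note $W^{1,\infty}\not\subset H^2$) but that this literature habitually assumes. With that granted, your bound $\|\rho^1-\rho^0\|_0\le Ch^2\tau$ and the remainder of the argument go through and deliver \eqref{ini:est:space}.
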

\begin{proof}
For the proof we refer to~\cite[Th.~3]{yang2023}, where the implicit--explicit Euler scheme was applied to the nonlinear thermistor problem.
\end{proof}
\begin{theorem}(Spatial error estimate)
\label{thm:err:spatial}
Consider $(U^n, \Phi^n)$ as the solution of~\eqref{eq:timeDis} with starting values given by~\eqref{eq:weak:starting}. Moreover, let $(U_h^n,\Phi_h^n)$ denote the fully discrete solution of system~\eqref{eq:BDF:Galerkin} for starting values $U_h^0 = I_h u^0$ and $(U_h^1,\Phi_h^1)$ from~\eqref{eq:int:Uh11_Phi11}. Then, for sufficiently small $\tau$ and $h$, we have
    \begin{equation}\label{ineq:err:full}
        \|U_h^n-I_hU^n\|_1+\|\Phi_h^n-I_h\Phi^n\|_1
        \leq C\, \big( h^2+h\tau^2 \big)
    \end{equation}
for all $n = 0,\ldots,N$. 
\end{theorem}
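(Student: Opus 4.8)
The plan is to parallel the temporal analysis of Theorem~\ref{thm:TemError}, but now measuring the fully discrete solution against the \emph{interpolant} of the time-discrete solution, and to decouple $h$ from $\tau$ by a bootstrap induction. I introduce the spatial error functions $e_h^n \coloneqq U_h^n - I_h U^n$ and $\eta_h^n \coloneqq \Phi_h^n - I_h\Phi^n$. Both belong to $\Vo$: the function $U^n$ and the discrete $U_h^n$ vanish on $\partial\Omega$, while $\Phi_h^n$ and $I_h\Phi^n$ both equal $I_h g^n$ there. Since $\Vo\subset V_0$, I may test the time-discrete weak form~\eqref{eq:timeDis} with discrete functions and subtract the scheme~\eqref{eq:BDF:Galerkin}. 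Writing $U_h^n - U^n = e_h^n - (U^n - I_hU^n)$ and similarly for $\Phi$, this produces error identities for $e_h^n$ and $\eta_h^n$ whose right-hand sides consist of interpolation-consistency terms, namely $(\nabla(U^n - I_hU^n),\nabla\xi_u)$ and $(D_\tau(U^n - I_hU^n),\xi_u)$, together with the difference of the two nonlinear source terms. The first consistency term is of order $h^2$ by the superclose estimate~\eqref{ineq:lem:Ih}, and since $D_\tau$ commutes with $I_h$ the second equals the interpolation error of $D_\tau U^n$, so that $\|D_\tau(U^n - I_hU^n)\|_0\le Ch^2\|D_\tau U^n\|_2$ by~\eqref{ineq:Ih}, a quantity summable in time by virtue of the stability bound~\eqref{ineq:thm:error2}.

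The proof runs by a two-step induction on $n$, the cases $n=0,1$ being covered by $U_h^0=I_hu^0$ and Lemma~\ref{lem:ini:est}. Assuming~\eqref{ineq:err:full} up to index $n-1$, the inverse estimate~\eqref{ineq:invese:pq} turns this into the uniform discrete bounds needed to linearise the nonlinearity \emph{without} a step-size condition: for example $\|e_h^k\|_{0,\infty}\le Ch^{-1}\|e_h^k\|_1\le C(h+\tau^2)$, whence $\|U_h^k\|_{0,\infty}\le C$ for $k\le n-1$ and $\sigma(U_h^k)$, $\sigma'(U_h^k)$ remain in their uniform ranges~\eqref{ineq:sigma}. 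In particular $2\sigma(U_h^{n-1})-\sigma(U_h^{n-2})\ge k_*>0$ as in~\eqref{eq:k*}, so both discrete problems at level $n$ are coercive and uniquely solvable. I first dispatch the elliptic $\phi$-error by testing the difference of~\eqref{eq:timeDis:b} and~\eqref{eq:BDF:Galerkin:b} with $\xi_\phi=\eta_h^n$: the coefficient difference is Lipschitz-controlled by $\|e_h^{n-1}\|_0+\|e_h^{n-2}\|_0+Ch^2$, which together with coercivity yields, in analogy with~\eqref{ineq:eta0}, a bound $\|\nabla\eta_h^n\|_0\le C(\|e_h^{n-1}\|_1+\|e_h^{n-2}\|_1+h^2)$.

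For the parabolic $u$-error I test its identity with $\xi_u=e_h^n$ and apply the telescope formula of Lemma~\ref{lem:tel:BDF} to the term $(D_\tau e_h^n,e_h^n)$. The decisive nonlinear difference $(2\sigma(U^{n-1})-\sigma(U^{n-2}))|\nabla\Phi^n|^2-(2\sigma(U_h^{n-1})-\sigma(U_h^{n-2}))|\nabla\Phi_h^n|^2$ splits into a conductivity part, bounded by Lipschitz continuity through $\|e_h^{n-1}\|_0+\|e_h^{n-2}\|_0+Ch^2$, and a gradient part that factorises as $\nabla(\Phi^n-\Phi_h^n)\cdot\nabla(\Phi^n+\Phi_h^n)$ and therefore feeds in $\nabla\eta_h^n$ and the interpolation error $\nabla(\Phi^n-I_h\Phi^n)$. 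Inserting the $\phi$-bound from the previous step, summing over the time levels, and invoking the discrete Gronwall inequality of Lemma~\ref{lem:Gronwall} delivers~\eqref{ineq:err:full}; the $h\tau^2$ contribution traces back exactly to the starting value $\|e_h^1\|_1\le C(h^2+h\tau^2)$ from Lemma~\ref{lem:ini:est}, and a short AM--GM step absorbs the resulting cross term $h^{3/2}\tau$ into $h^2+h\tau^2$. Closing the induction finishes the proof.

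The main obstacle is the quadratic gradient term in the heat source. After factorisation one meets quantities of the type $\|\nabla\eta_h^n\|_{0,4}^2$, and the inverse estimate~\eqref{ineq:invese:pq} with $p=4$, $q=2$ only gives $\|\nabla\eta_h^n\|_{0,4}^2\le Ch^{-1}\|\nabla\eta_h^n\|_0^2$; one must check that the $O(h^2)$ size of $\|\nabla\eta_h^n\|_0$ leaves the product at $O(h^3)$, so that it is genuinely higher order and can be absorbed. Making these inverse-inequality losses consistent with the induction hypothesis throughout—and securing a uniform bound for $\nabla\Phi_h^n=\nabla I_h\Phi^n+\nabla\eta_h^n$ in the norms entering the quadratic terms—is the delicate bookkeeping that renders the estimate unconditional.
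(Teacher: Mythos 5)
Your overall architecture (two-step induction, inverse inequalities to get unconditional $L^\infty$ control, elliptic error first, then the parabolic error, then discrete Gronwall) matches the paper, and your treatment of $\eta_h^n$ is essentially the paper's. However, there is a genuine gap in the parabolic step: you test the error equation for $e_h^n$ with $\xi_u=e_h^n$. The telescope formula of Lemma~\ref{lem:tel:BDF} applied to $(D_\tau e_h^n,e_h^n)$ then controls $\|e_h^n\|_0^2$ pointwise in time and the diffusion term only contributes $\tau\sum_\ell\|\nabla e_h^\ell\|_0^2$, i.e.\ an $\ell^2$-in-time bound on the gradient. The assertion \eqref{ineq:err:full} is a \emph{pointwise-in-time} $H^1$ bound, and so is the induction hypothesis you need to propagate (your $L^\infty$ bounds $\|e_h^k\|_{0,\infty}\le Ch^{-1}\|e_h^k\|_0$ and the coercivity constant $k_*$ both require $\|\nabla e_h^{n-1}\|_0\le C(h^2+h\tau^2)$ at every level, not in mean square). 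An inverse inequality applied to the pointwise $L^2$ bound only recovers $\|\nabla e_h^n\|_0\le C(h+\tau^2)$, which destroys the supercloseness. The paper instead tests with $\xi_u=D_\tau e_h^n$, so that $(\nabla e_h^n,\nabla D_\tau e_h^n)$ telescopes into $\|\nabla e_h^n\|_0^2$ at each level; this choice is essential and your argument does not close without it.

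Switching to $\xi_u=D_\tau e_h^n$ creates a second difficulty that your proposal does not address: the interpolation term $\big(\sigma(u^n)\nabla\phi^n\cdot\nabla(I_h\phi^n-\phi^n),D_\tau e_h^n\big)$ can no longer be handled by Lemma~\ref{lem:sup:Ih}, because estimate \eqref{ineq:lem:Ih2} pays the full $H^1$-norm of the test function and $\|D_\tau e_h^n\|_1$ is not under control; the crude bound $Ch\|D_\tau e_h^n\|_0$ followed by Young and summation leaves an $O(h^2)$ remainder in the \emph{squared} estimate, i.e.\ only first-order supercloseness. The paper resolves this with a discrete summation by parts in time, using $2(a^n,D_\tau b^n)=3(a^n,\widetilde D_\tau b^n)-(a^n,\widetilde D_\tau b^{n-1})$ and $(a^n,\widetilde D_\tau b^n)=\widetilde D_\tau(a^n,b^n)-(\widetilde D_\tau a^n,b^{n-1})$, so that after summing the troublesome pairing reappears against $e_h^n$ itself, where \eqref{ineq:lem:Ih2} gives $Ch^2\|\phi^n\|_3\|e_h^n\|_1$. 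Without this device (or an equivalent one) the argument stalls at $O(h)$ in $H^1$. The remaining ingredients you list --- the $W^{2,4}\hookrightarrow W^{1,\infty}$ embedding for $\Phi^n$, the inverse-inequality bookkeeping for $\|\nabla\eta_h^n\|_{0,4}^2$, and the role of the starting values from Lemma~\ref{lem:ini:est} --- are consistent with the paper.
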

\begin{proof}
We define the errors as $e_h^n\coloneqq U_h^n-I_hU^n\in \Vo$ and $\eta_h^n\coloneqq\Phi_h^n-I_h\Phi^n\in \Vo$. As a first step, we prove 
\begin{equation}\label{ineq:priEst:spa}
    \|\nabla e_h^{n}\|_0 
    \leq C\, (h^2 + h \tau^2), \qquad n=0,1,\ldots,N, 
\end{equation}
using mathematical induction. The case $ n = 0 $ is trivial, since $ \|\nabla e_h^0\|_0 = \|\nabla(U_h^0 - I_h U^0)\|_0 = 0 $ by assumption. 
The case $n=1$ is covered by  \eqref{ini:est:space}. Now assume \eqref{ineq:priEst:spa} is valid up to index $n-1$ and we prove that the estimate remains true for $n$.
 
From \eqref{eq:BDF:Galerkin:b} and \eqref{eq:timeDis:b} considered with $\xi_\phi=\eta_h^{n}$ , the spatial error function $\eta_h^n$ satisfies
    \begin{alignat}{2}
        \left((2\sigma(U^{n-1})-\sigma(U^{n-2}))\nabla\eta_h^{n},\nabla \eta_h^{n}\right) &=  2\left((\sigma(U^{n-1})-\sigma(U_h^{n-1}))\nabla\eta_h^n,\nabla \eta_h^{n}\right)\nonumber\\
    &\qquad+ 2\left((\sigma(U^{n-1})-\sigma(U_h^{n-1}))\nabla I_h\Phi^{n},\nabla \eta_h^{n}\right)\nonumber\\
        &\qquad - \left((\sigma(U^{n-2})-\sigma(U_h^{n-2}))\nabla\eta_h^n,\nabla \eta_h^{n}\right)\nonumber\\
    &\qquad-\left((\sigma(U^{n-2})-\sigma(U_h^{n-2}))\nabla I_h\Phi^{n},\nabla \eta_h^{n}\right)\nonumber\\
        &\qquad + 2\left((\sigma(U^{n-1})-\sigma(u^{n-1}))\nabla(\Phi^{n}-I_h\Phi^{n}),\nabla\eta_h^{n}\right)\nonumber\\
        &\qquad - \left((\sigma(U^{n-2})-\sigma(u^{n-2}))\nabla(\Phi^{n}-I_h\Phi^{n}),\nabla\eta_h^{n}\right)\nonumber\\
        &\qquad + \left((2\sigma(u^{n-1})-\sigma(u^{n-2}))\nabla(\eta_\tau^{n}-I_h\eta_\tau^{n}),\nabla \eta_h^{n}\right)\nonumber\\
        &\qquad+\left((2\sigma(u^{n-1})-\sigma(u^{n-2}))\nabla(\phi^{n}-I_h\phi^{n}),\nabla\eta_h^{n}\right)\nonumber\\
       &\eqqcolon \sum_{\ell=1}^8 A_\ell,\nonumber
    \end{alignat}
where $\eta_\tau^n=\Phi^n-\phi^n$ denotes the temporal error known from the proof of Theorem \ref{thm:TemError}. By \eqref{ineq:sigma}, it is easy to see that the left-hand side satisfies  
\[
    k_*\|\nabla\eta_h^{n}\|_0^2
    \leq\big((2\sigma(U^{n-1})-\sigma(U^{n-2}))\nabla\eta_h^{n},\nabla\eta_{h}^{n}\big),
\]
where $k_*$ is as defined in~\eqref{eq:k*}.
By the inverse inequality  \eqref{ineq:invese:pq}, we obtain 
\begin{align*}
    A_{1}&\leq   2 L_{\sigma,2} \|U^{n-1}-U_h^{n-1}\|_0\,\|\nabla\eta_h^n\|_{\infty}\,\|\nabla \eta_h^{n}\|_0\nonumber\\
    &\leq Ch^{-1}\left(\|U^{n-1}-I_hU^{n-1}\|_0+\|e_h^{n-1}\|_0\right)\|\nabla \eta_h^{n}\|_0^2\\
    &\leq  Ch^{-1}\left(h^2\|U^{n-1}\|_2+\|\nabla e_h^{n-1}\|_0\right)\|\nabla \eta_h^{n}\|_0^2\\
    &\leq \tfrac14\, \|\nabla \eta_h^{n}\|_0^2,
\end{align*}
where we use the bound $\|U^{n-1}\|_2\leq C$ from \eqref{ineq:thm:error2} and require $C(h+\tau^2)\leq \frac14$, which is satisfied for sufficiently small $\tau$ and $h$. Similarly, using \eqref{ineq:Ih} and \eqref{ineq:sta:Ih}, we get
\begin{align*}
    A_{2}
    &\leq 2 L_{\sigma,2} \|U^{n-1}-U_h^{n-1}\|_0\|\Phi^n\|_{1,\infty}\|\nabla \eta_h^{n}\|_0\\
    &\leq C\left(h^2\|U^{n-1}\|_2+\| e_h^{n-1}\|_0\right)\|\Phi^n\|_{2,4}\|\nabla \eta_h^{n}\|_0\\
    &\leq C\, (h^2+h\tau^2)\|\nabla \eta_h^{n}\|_0,
\end{align*}
where we have used the embedding $W^{2,4}(\Omega)\hookrightarrow W^{1,\infty}(\Omega)$
(see~\cite{nirenberg1966} for further details) and $\|\Phi^n\|_{2,4}\leq C$ from \eqref{ineq:thm:error2}. 
For $A_3$ and $A_4$, we obtain the same estimates, respectively. 
By \eqref{ineq:Ih}, \eqref{ineq:thm:error1}, \eqref{ineq:thm:error2}, and the embedding $H^1(\Omega)\hookrightarrow W^{0,4}(\Omega)$,
we have
 \begin{align*}
    A_5&\leq 2 L_{\sigma,4} \|U^{n-1}-u^{n-1}\|_{0,4}\|\nabla(\Phi^n-I_h\Phi^n)\|_{0,4}\|\nabla \eta_h^{n}\|_0\\
    &\leq Ch\, \|U^{n-1}-u^{n-1}\|_1\|\Phi^{n}\|_{2,4}\|\nabla \eta_h^{n}\|_0\\
    &\leq Ch\tau^2\|\nabla \eta_h^{n}\|_0.
\end{align*}
For $A_6$, we obtain the same estimate. 
Again, by using \eqref{ineq:Ih}, \eqref{ineq:thm:error1}, and \eqref{ineq:thm:error1}, we get
\begin{align*}
    A_7 
    &\leq 3 k_2\, \|\nabla(\eta_\tau^{n}-I_h\eta_\tau^{n})\|_0\|\nabla \eta_h^{n}\|_0
    \leq Ch\, \|\eta_\tau^{n}\|_2\|\nabla \eta_h^{n}\|_0
    \leq Ch\tau^2\|\nabla \eta_h^{n}\|_0.
\end{align*}
By \eqref{ineq:lem:Ih} from  Lemma \ref{lem:sup:Ih} and the regularity assumptions \eqref{eq:regularity}, $A_8$ is bounded by
\begin{align*}
    A_8
    \leq 3\,C k_2h^2\|\phi^{n}\|_3\,\|\nabla\eta_h^{n}\|_0
   \leq Ch^2\|\nabla \eta_h^{n}\|_0.
\end{align*}
Combining the obtained estimates  for $A_1,\ldots,A_8$, we have
\begin{equation}\label{ineq:est:eta_h}
    \|\nabla \eta_h^{n}\|_0
    \leq C\, (h^2+h\tau^2).
\end{equation}
From \eqref{eq:BDF:Galerkin:a} and \eqref{eq:timeDis:a} considered with $\xi_u =  D_\tau e_h^{n}$, the spatial error function $e_h^n$ satisfies
    \begin{alignat}{2}
        (D_\tau e_h^{n},D_\tau e_h^{n})+(\nabla e_h^{n},\nabla D_\tau e_h^{n})& =  \left(D_\tau(U^{n}-I_hU^{n}),D_\tau e_h^{n}\right)\nonumber\\
        &\qquad + 2\left((\sigma(U_h^{n-1}) - \sigma(U^{n-1}))|\nabla \Phi^{n}|^2,D_\tau e_h^{n}\right)\nonumber\\
        &\qquad - \left((\sigma(U_h^{n-2}) - \sigma(U^{n-2}))|\nabla \Phi^{n}|^2,D_\tau e_h^{n}\right)\nonumber\\
        &\qquad + \left((2\sigma(U_h^{n-1}) - \sigma(U_h^{n-2}))(|\nabla\Phi_h^{n}|^2-|\nabla\Phi^{n}|^2),D_\tau e_h^{n}\right)\nonumber\\
        &\eqqcolon B_1 + B_2 + B_3 + B_4. \label{eq:errors:spa:e}
    \end{alignat}
By \eqref{ineq:Ih} and Young’s inequality, the first two terms can be bounded by 
\[
    B_1
    \leq h^2\|D_\tau U^{n}\|_2\,\|D_\tau e_h^{n}\|_0
    \leq Ch^4\|D_\tau U^{n}\|_2^2+\tfrac{1}{12}\,\|D_\tau e_h^{n}\|_0^2
\]
and 
\begin{align*}
    B_2&\leq  \|\sigma(U_h^{n-1})-\sigma(U^{n-1})\|_0\,\|\nabla \Phi^{n}\|_{\infty}^2\,\|D_\tau   e_h^{n}\|_0\\
    &\leq  C L_{\sigma,2}\left(h^2\|U^{n-1}\|_2+\|\nabla e_h^{n-1}\|_0\right)\| D_\tau  e_h^{n}\|_0\\
    &\leq C \big(h^4+\|\nabla e_h^{n-1}\|_0^2 \big) + \tfrac{1}{12}\, \|D_\tau   e_h^{n}\|_0^2,
\end{align*}
where we have used again the embedding $W^{2,4}(\Omega)\hookrightarrow W^{1,\infty}(\Omega)$ and $\|U^{n-1}\|_2\leq C$ from \eqref{ineq:thm:error2}. Similarly, it holds
\begin{align*}
    B_3
    \leq C \big(h^4+\|\nabla e_h^{n-2}\|_0^2\big) + \tfrac{1}{12}\, \|D_\tau e_h^{n}\|_0^2.
\end{align*}
Note that $B_4$ can be rewritten as 
\begin{align*}
    B_4&=  4\left((\sigma(U_h^{n-1})-\sigma(U^{n-1}))\nabla \Phi^{n}\cdot \nabla(\Phi_h^{n}-\Phi^{n}),  D_\tau  e_h^{n}\right) \\
    &\qquad -2\left((\sigma(U_h^{n-2})-\sigma(U^{n-2}))\nabla \Phi^{n}\cdot \nabla(\Phi_h^{n}-\Phi^{n}),  D_\tau  e_h^{n}\right) \\
    &\qquad+ \left((2\sigma(U_h^{n-1})-\sigma(U_h^{n-2}))|\nabla(\Phi_h^{n}-\Phi^{n})|^2, D_\tau e_h^{n}\right) \\
    &\qquad+ 2\left((2\sigma(U^{n-1})-\sigma(U^{n-2}))\nabla \Phi^{n}\cdot\nabla (\Phi_h^{n}-I_h\Phi^{n}),   D_\tau e_h^{n}\right)\\
    &\qquad  + 2\left((2\sigma(U^{n-1})-\sigma(U^{n-2}))\nabla \Phi^{n}\cdot\nabla (I_h\Phi^{n}-\Phi^{n}),   D_\tau e_h^{n}\right)\\
    &\eqqcolon  B_{41} + B_{42} +B_{43} +B_{44} + B_{45}. 
\end{align*}
By \eqref{ineq:Ih}, the inverse inequality  \eqref{ineq:invese:pq} with $m=0$, $p=4$, $q=2$, \eqref{ineq:thm:error2}, and \eqref{ineq:est:eta_h} we have 
\begin{align*}
    B_{41}&\leq  4 L_{\sigma,4} \|U_h^{n-1}-U^{n-1}\|_{0,4}\,\|\nabla \Phi^{n}\|_{\infty}\,\|\nabla(\Phi_h^{n}-\Phi^{n})\|_{0,4}\,\| D_\tau   e_h^{n}\|_0\\
    &\leq  C\big(\|e_h^{n-1}\|_{0,4}+h\|U^{n-1}\|_2\big)\big(h \|\Phi^{n}\|_{2,4}+\|\nabla \eta _h^{n}\|_{0,4}\big)\|D_\tau   e_h^{n}\|_0\\
    &\leq  C\big(\|\nabla e_h^{n-1}\|_0+h\big)\big(h+h^{-\frac12}(h^2+h\tau^2)\big)\|D_\tau    e_h^{n}\|_0.
    \end{align*}
    Hence, Young's inequality yields
    \begin{align*}
    B_{41}&\leq   C\big(\|\nabla e_h^{n-1}\|_0+h^2+h\tau^2\big)\|  D_\tau e_h^{n}\|_0
    \leq \, C\left(\|\nabla e_h^{n-1}\|_0^2 + (h^2+h\tau^2)^2\right)+\tfrac{1}{12}\, \|D_\tau    e_h^{n}\|_0^2.
    \end{align*}
  Similarly, one shows
\begin{align*}
    B_{42}
    \leq C\left((h^2+h\tau^2)^2+\|\nabla e_h^{n-2}\|_0^2\right)+\tfrac{1}{12}\, \|D_\tau    e_h^{n}\|_0^2.
\end{align*}
Using the inverse inequality \eqref{ineq:invese:pq} and \eqref{ineq:est:eta_h}, it holds that
\begin{align*}
    B_{43}
    &\leq 3k_2\|\nabla(\Phi_h^{n}-\Phi^{n})\|_{0,4}^2\|  D_\tau e_h^{n}\|_0 \\
    &\leq C\, (h^2+h\tau^2) \| D_\tau e_h^{n}\|_0 \\
    &\leq C\, (h^2+h\tau^2)^2+\tfrac{1}{12}\, \| D_\tau  e_h^{n}\|_0^2,
\end{align*}
where we have used $\|\nabla(\Phi_h^{n}-\Phi^{n})\|_{0,4}\leq C(h+h^{1/2}\tau^2)$ as in the estimate of $B_{41}$ before. 
By the embedding $W^{2,4}(\Omega)\hookrightarrow W^{1,\infty}(\Omega)$, \eqref{ineq:thm:error2},  \eqref{ineq:est:eta_h}, and Young's inequality,  we obtain
\begin{align*}
    B_{44}&\leq  6k_2\|\nabla \Phi^n\|_{\infty}\|\nabla(\Phi_h^n-I_h\Phi^n)\|_{0}\|D_\tau e_h^n\|_{0}\\
    &\leq  C\,\|\Phi^n\|_{2,4}\|\nabla\eta_h^n\|_0\|D_\tau e_h^n\|_0\\
    &\leq  C(h^2+h\tau^2)^2+\tfrac{1}{12}\,\|D_\tau e_h^n\|_0^2.
\end{align*}
To estimate $B_{45}$, we rewrite it as 
\begin{align*}
    B_{45} &=  4\big((\sigma(U^{n-1})-\sigma(u^{n-1}))\nabla \Phi^n\cdot \nabla(I_h\Phi^n-\Phi^n),D_\tau e_h^n\big)\\
    &\qquad -2\big((\sigma(U^{n-2})-\sigma(u^{n-2}))\nabla \Phi^n\cdot \nabla(I_h\Phi^n-\Phi^n),D_\tau e_h^n\big)\\
    &\qquad +2\big((2\sigma(u^{n-1})-\sigma(u^{n-2}))\nabla \eta_\tau^n\cdot \nabla(I_h\Phi^n-\Phi^n),D_\tau e_h^n\big)\\
    &\qquad +2\big((2\sigma(u^{n-1})-\sigma(u^{n-2}))\nabla \phi^n\cdot \nabla(I_h\eta_\tau^n-\eta_\tau^n),D_\tau e_h^n\big)\\
    &\qquad -2\big((\sigma(u^n)-2\sigma(u^{n-1})+\sigma(u^{n-2}))\nabla \phi^n\cdot\nabla (I_h\phi^n-\phi^n),D_\tau e_h^n\big)\\
    &\qquad +2\big(\sigma(u^n)\nabla \phi^n\cdot\nabla (I_h\phi^n-\phi^n),D_\tau e_h^n\big)\\
    &\eqqcolon \sum_{\ell =1}^6 D_{\ell}.
\end{align*}
Using \eqref{ineq:Ih}, \eqref{ineq:thm:error1}, \eqref{ineq:thm:error2}, Sobolev embeddings, and Young's inequality, the first term is bounded by 
\begin{align*}
    D_{1}
    &\leq 4 L_{\sigma,4}\|U^{n-1}-u^{n-1}\|_{0,4}\|\nabla\Phi^n\|_{\infty}\|\nabla(I_h\Phi^n-\Phi^n)\|_{0,4}\|D_\tau e_h^n\|_{0}\\
    &\leq Ch\, \|U^{n-1}-u^{n-1}\|_1\|\Phi^n\|_{2,4}\|D_\tau e_h^n\|_{0}\\
    &\leq Ch\tau^2\|D_\tau e_h^n\|_{0}\\
    &\leq C (h\tau^2)^2+\tfrac{1}{12}\, \|D_\tau e_h^n\|_{0}^2.
\end{align*}
A similar estimate holds for $D_{2}$. Using the same estimates as before, we deduce 
\begin{align*}
    D_{3}&\leq 6k_2\|\nabla \eta_\tau^n\|_{0,4}\|\nabla(I_h\Phi^n-\Phi^n)\|_{0,4}\|D_\tau e_h^n\|_{0}\\
    &\leq  Ch\|\eta_\tau^n\|_{2}\|\Phi^n\|_{2,4}\|D_\tau e_h^n\|_{0}\\
    &\leq  C(h\tau^2)^2+\tfrac{1}{12}\,\|D_\tau e_h^{n}\|_0^2,
\end{align*}
where we have used the embedding $H^2(\Omega)\hookrightarrow W^{1,4}(\Omega)$. Moreover, we have
\begin{align*}
    D_{4}
    \leq 6k_2\|\nabla \phi^n\|_{\infty}\|\nabla(I_h\eta_\tau^n-\eta_\tau^n)\|_{0}\|D_\tau e_h^n\|_0
    \leq Ch\, \|\eta_\tau^n\|_2\|D_\tau e_h^n\|_0
    \leq C(h\tau^2)^2+\tfrac{1}{12} \|D_\tau e_h^n\|_{0}^2.
\end{align*}
By \eqref{ineq:Ih} and Young's inequality, one further obtains 
\begin{align*}
    D_5
    &\leq 2\tau^2\|{\sigma}''(u^{n-1})\|_{0}\|\nabla \phi^n\|_{\infty}\|\nabla (I_h\phi^n-\phi^n)\|_{0,4}\|D_\tau e_h^n\|_{0}\\
    &\leq Ch\tau^2\|\phi^n\|_{2,4}\|D_\tau e_h^n\|_{0}\\
    &\leq C(h\tau^2)^2 + \tfrac{1}{12}\,\|D_\tau e_h^n\|_{0}^2.
\end{align*}
For the last term $D_{6}$, we use the identity $2(a^n,D_\tau b^n)=3(a^n,\widetilde{D}_\tau b^n)-(a^n,\widetilde{D}_\tau b^{n-1})$, where $\widetilde{D}_\tau f^n \coloneqq\tau^{-1}({f^n-f^{n-1}})$ denotes the first-order difference quotient. Therefore,   we have 
\begin{align*}
    D_{6} &= 2\big(\sigma(u^n)\nabla \phi^n\cdot\nabla (I_h\phi^n-\phi^n),D_\tau e_h^n\big)\\
          &= 3 \big(\sigma(u^n)\nabla \phi^n\cdot\nabla (I_h\phi^n-\phi^n),\widetilde{D}_\tau e_h^n\big)
          -\big(\sigma(u^n)\nabla \phi^n\cdot\nabla (I_h\phi^n-\phi^n),\widetilde{D}_\tau e_h^{n-1}\big)\\
          &\eqqcolon D_{61} +D_{62}.
\end{align*}
Now, using the fact that $(a^n,\widetilde{D}_\tau b^n)=\widetilde{D}_\tau(a^n, b^n)-(\widetilde{D}_\tau a^n,b^{n-1})$ and an estimation as in~\cite[p.~15]{yang2023}, $D_{61}$ can be bounded by
\begin{align*}
    D_{61} 
    \leq  3 \widetilde{D}_\tau\big(\sigma(u^n)\nabla \phi^n\cdot\nabla (I_h\phi^n-\phi^n), e_h^n\big) + Ch^2\|\nabla e_h^{n-1}\|_{0}.
\end{align*}    
Analogously, one obtains the estimate
\[
    D_{62}
    \leq -\widetilde{D}_\tau\big(\sigma(u^n)\nabla \phi^n\cdot\nabla (I_h\phi^n-\phi^n), e_h^{n-1}\big) + Ch^2\|\nabla e_h^{n-2}\|_{0}.
\]
With these estimates and Young's inequality, it holds that 
\begin{align*}
     B_{45}&\leq   C\big((h^2+h\tau^2)^2+\|\nabla e_h^{n}\|_{0}^2 +\|\nabla e_h^{n-1}\|_{0}^2 + \|\nabla e_h^{n-2}\|_{0}^2\big)+\tfrac{5}{12}\,\|D_\tau e_h^n\|_0^2 \\
     &\qquad +  3 \widetilde{D}_\tau\big(\sigma(u^n)\nabla \phi^n\cdot\nabla (I_h\phi^n-\phi^n), e_h^n\big) -\widetilde{D}_\tau\big(\sigma(u^n)\nabla \phi^n\cdot\nabla (I_h\phi^n-\phi^n), e_h^{n-1}\big).
\end{align*}
From the estimates of $B_{41},\ldots,B_{45}$, it follows that 
\begin{align*}
    B_4
    &\leq C\left((h^2+h\tau^2)^2+\|\nabla e_h^{n}\|_0^2+\|\nabla e_h^{n-1}\|_0^2+\|\nabla e_h^{n-2}\|_0^2\right)+\tfrac{9}{12}\,\|D_\tau e_h^{n}\|_0^2\\
    &\qquad+ 3 \widetilde{D}_\tau\big(\sigma(u^n)\nabla \phi^n\cdot\nabla (I_h\phi^n-\phi^n), e_h^n\big) -\widetilde{D}_\tau\big(\sigma(u^n)\nabla \phi^n\cdot\nabla (I_h\phi^n-\phi^n), e_h^{n-1}\big).
\end{align*}
Substituting the estimates of $B_1,\ldots,B_4$ into \eqref{eq:errors:spa:e} leads to 
\begin{align*}
    \|D_\tau  e_h^{n}\|_0^2\,+ \left( \nabla  e_h^{n},D_\tau\nabla e_h^{n}\right)
    &\leq Ch^4\|D_\tau U^{n}\|_2^2 +  C(h^2+h\tau^2)^2\,+\|D_\tau e_h^{n}\|_0^2\\
    &\qquad +C\left(\|\nabla e_h^{n}\|_0^2+\|\nabla e_h^{n-1}\|_0^2+\|\nabla e_h^{n-2}\|_0^2\right)\\
    &\qquad +  3 \widetilde{D}_\tau\big(\sigma(u^n)\nabla \phi^n\cdot\nabla (I_h\phi^n-\phi^n), e_h^n\big)\\
    &\qquad -\widetilde{D}_\tau\big(\sigma(u^n)\nabla \phi^n\cdot\nabla (I_h\phi^n-\phi^n), e_h^{n-1}\big).
\end{align*}
By summing up, using the telescope formula for $D_\tau$ from Lemma \ref{lem:tel:BDF}, and  \eqref{ineq:thm:error2}, we obtain
\begin{align}
    \|\nabla e_h^{n}\|_0^2&\leq  C(h^2+h\tau^2)^2+C\tau \sum_{\ell=1}^{n}\|\nabla e_h^{\ell}\|_0^2\nonumber\\
    &\qquad + 3 \big(\sigma(u^n)\nabla \phi^n\cdot\nabla (I_h\phi^n-\phi^n), e_h^n\big) -\big(\sigma(u^n)\nabla \phi^n\cdot\nabla (I_h\phi^n-\phi^n), e_h^{n-1}\big).\label{ineq:last}
\end{align}
By \eqref{ineq:lem:Ih2} from Lemma \ref{lem:sup:Ih}, the regularity assumptions \eqref{eq:regularity}, and the fact that $e_h^n\in \Vo$, one shows 
\begin{align*}
    3\big(\sigma(u^n)\nabla \phi^n\cdot\nabla (I_h\phi^n-\phi^n), e_h^n\big) 
    &\leq Ch^2\|\phi^n\|_3\|e_h^n\|_1\leq Ch^4+\tfrac12\,\|\nabla e_h^{n}\|_0^2.
\end{align*}
Using additionally \eqref{ineq:priEst:spa}, we get
\begin{align*}
    -\big(\sigma(u^n)\nabla \phi^n\cdot\nabla (I_h\phi^n-\phi^n), e_h^{n-1}\big) 
    &\leq Ch^2\|\phi^n\|_3\|e_h^{n-1}\|_1 \\
    &\leq Ch^4+\tfrac12\, \|\nabla e_h^{n-1}\|_0^2
    \leq C\, (h^2+h\tau^2)^2.
\end{align*}
Substituting these estimates into \eqref{ineq:last}, we get 
 \begin{align*}
    \|\nabla e_h^{n}\|_0^2
    &\leq C\, (h^2+h\tau^2)^2+C\tau \sum_{\ell=1}^{n}\|\nabla e_h^{\ell}\|_0^2 + \tfrac{1}{2}\, \|\nabla e_h^n\|_{0}^2.
\end{align*}
Applying Gronwall’s inequality from Lemma \ref{lem:Gronwall}, we obtain
\begin{align*}
    \|\nabla e_h^{n}\|_0^2  &\leq  C\exp\left(\tau \sum_{\ell=0}^{n}\frac{C}{1-C\tau}\right)(h^2+h\tau^2)^2 \leq \, C\exp\left(2CT\right)(h^2+h\tau^2)^2.
\end{align*}
Hence, estimate \eqref{ineq:priEst:spa} holds for $n\geq 0$. Combining \eqref{ineq:priEst:spa} and \eqref{ineq:est:eta_h} together with Poincaré's inequality, the desired result follows. 
\end{proof}
%
%
\section{Convergence Results}\label{Sec:main}
Based on Theorems \ref{thm:TemError} and \ref{thm:err:spatial}, we obtain the following error estimates. 
\begin{theorem}(Convergence of the implicit--explicit BDF--Galerkin scheme)
\label{thm:main}
Suppose that system~\eqref{eq:thermistor} has a unique solution $(u,\phi)$ satisfying \eqref{eq:regularity} and that $\tau$ and $h$ are sufficiently small. Then the discrete system \eqref{eq:BDF:Galerkin} with the starting values $U_h^1$, $\Phi_h^1$ from \eqref{eq:int:Uh11_Phi11} admits a unique solution $(U_h^n,\Phi_h^n)$, which satisfies the error bounds 
\begin{subequations}
  \label{ineq:mainT}
    \begin{alignat}{2}
        \|U_h^n-u^n\|_0 \, + \, \|\Phi_h^n-\phi^n\|_0
        &\leq \, C \big( h^2+\tau^2 \big),\label{ineq:mainT:a}\\
        \|U_h^n-u^n\|_1 \, + \, \|\Phi_h^n-\phi^n\|_1
        &\leq \, C \big( h+\tau^2 \big),\label{ineq:mainT:b}\\
        \|U_h^n-I_hu^n\|_1\, + \, \|\Phi_h^n-I_h\phi^n\|_{1}
        &\leq \, C \big( h^2+\tau^2 \big).\label{ineq:mainT:c}
        \end{alignat}
\end{subequations}
\end{theorem}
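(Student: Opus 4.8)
The plan is to read Theorem~\ref{thm:main} as a corollary of the two preceding theorems, assembled through the error splitting that underlies the whole analysis. For each $n$ and each component I would insert the interpolated time-discrete solution as an intermediate quantity and write
\[
    U_h^n - u^n = \big(U_h^n - I_h U^n\big) + \big(I_h U^n - u^n\big),
\]
and analogously for $\Phi$. The first summand is the purely \emph{spatial} error controlled by Theorem~\ref{thm:err:spatial}, namely $\|U_h^n - I_h U^n\|_1 + \|\Phi_h^n - I_h\Phi^n\|_1 \le C(h^2 + h\tau^2)$, while the second is governed by the \emph{temporal} error $U^n - u^n$, $\Phi^n - \phi^n$ bounded in Theorem~\ref{thm:TemError}. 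Before estimating, I would dispose of the well-posedness claim: given $U_h^{n-1},U_h^{n-2}$, equation~\eqref{eq:BDF:Galerkin:b} is a linear problem for $\Phi_h^n$ that is coercive once $2\sigma(U_h^{n-1})-\sigma(U_h^{n-2})$ is bounded below by a positive constant, by the same argument as in~\eqref{eq:k*}, which holds as soon as $\|U_h^{n-1}-U_h^{n-2}\|_\infty = \mathcal{O}(\tau)$ is secured through the inverse inequality and the induction underlying Theorem~\ref{thm:err:spatial}; afterwards~\eqref{eq:BDF:Galerkin:a} is a standard coercive linear problem for $U_h^n$. Together with Lemma~\ref{lem:ini:est} for the starting values, this gives existence and uniqueness of $(U_h^n,\Phi_h^n)$.

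For the supercloseness estimate~\eqref{ineq:mainT:c} I would instead use the decomposition
\[
    U_h^n - I_h u^n = \big(U_h^n - I_h U^n\big) + I_h\big(U^n - u^n\big),
\]
bound the first term by Theorem~\ref{thm:err:spatial}, and the second by the stability~\eqref{ineq:sta:Ih} of the interpolation together with the temporal bound, $\|I_h(U^n-u^n)\|_1 \le C\|U^n-u^n\|_1$ from~\eqref{ineq:thm:error1}; the same argument applies to $\Phi_h^n - I_h\Phi^n$ using $\|\Phi^n-\phi^n\|_2$ from~\eqref{ineq:thm:error1}. The decisive point is that, by measuring against $I_h u^n$ rather than $u^n$, no raw interpolation error $\|u^n - I_h u^n\|_1 = \mathcal{O}(h)$ enters, so the spatial $\mathcal{O}(h^2)$ rate is retained and the two contributions combine to~\eqref{ineq:mainT:c}.

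The remaining two estimates follow by reinserting $I_h u^n$ (resp.\ $I_h\phi^n$) and invoking the interpolation estimate~\eqref{ineq:Ih}. For the $H^1$-bound~\eqref{ineq:mainT:b} I would add $\|u^n - I_h u^n\|_1 \le C h\,\|u^n\|_2 \le Ch$ to the supercloseness estimate, where $\|u^n\|_2 \le \|u^n\|_3 \le C$ by the regularity~\eqref{eq:regularity}; this is precisely the term that degrades the rate from $h^2$ to $h$. For the $L^2$-bound~\eqref{ineq:mainT:a} I would estimate the spatial part by Poincaré, $\|U_h^n - I_h U^n\|_0 \le C\|\nabla(U_h^n-I_hU^n)\|_0$ (legitimate since $U_h^n - I_h U^n\in\Vo$), and the interpolation part by the $L^2$ estimate $\|u^n - I_h u^n\|_0 \le C h^2\|u^n\|_2$, with the temporal contribution handled as above; collecting terms yields~\eqref{ineq:mainT:a}.

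Since the genuine analytical effort has already been spent in Theorems~\ref{thm:TemError} and~\ref{thm:err:spatial}, I expect no new obstacle here beyond careful bookkeeping. The two points requiring attention are: keeping the norms consistent (using $\|U^n-u^n\|_2\le C\tau^{3/2}$ from~\eqref{ineq:thm:error1} whenever the interpolation of the temporal error is measured in $W^{2,p}$), and confirming that the coercivity constant stays positive in the fully discrete setting, which is where the unconditional nature of the result is ultimately anchored. The only real subtlety is to verify that the temporal contributions enter at the orders appearing in~\eqref{ineq:mainT} and do not spoil the superclose $\mathcal{O}(h^2)$ rate of~\eqref{ineq:mainT:c}.
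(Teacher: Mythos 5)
Your proposal follows essentially the same route as the paper: the paper proves \eqref{ineq:mainT:c} first via the triangle inequality through $I_hU^n$, combining the spatial bound \eqref{ineq:err:full} with the interpolation stability \eqref{ineq:sta:Ih} and the temporal bound \eqref{ineq:thm:error1}, and then obtains \eqref{ineq:mainT:b} and \eqref{ineq:mainT:a} by adding the interpolation errors $\|u^n-I_hu^n\|_1\le Ch$ and $\|u^n-I_hu^n\|_0\le Ch^2\|u^n\|_2$, exactly as you describe. The ``subtlety'' you flag at the end is genuine but is equally present in the paper's own proof, where the temporal contribution $C\|U^n-u^n\|_1\le C\tau^2$ is silently absorbed into $C(h^2+h\tau^2)$ without justification; your additional discussion of well-posedness via coercivity of $2\sigma(U_h^{n-1})-\sigma(U_h^{n-2})$ goes slightly beyond what the paper writes here but is consistent with the induction arguments of the preceding theorems.
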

\begin{proof}
By \eqref{ineq:sta:Ih}, \eqref{ineq:thm:error1}, and \eqref{ineq:err:full}, we have
\begin{align*}
    \|U_h^n-I_hu^n\|_1
    &\leq \|U_h^n-I_hU^n\|_1+\|I_hU^n-I_hu^n\|_1\nonumber\\
    &\leq \|U_h^n-I_hU^n\|_1+C\,\|U^n-u^n\|_1
    \leq C\, (h^2+\tau^2).\label{ineq:thm1:a}
\end{align*}
On the other hand, by triangle inequality, the previous estimate implies
\begin{equation*}
    \|U_h^n-u^n\|_1
    \leq \|U_h^n-I_hu^n\|_1+\|I_hu^n-u^n\|_1
    \leq C\, (h+\tau^2)
\end{equation*}
It follows
\begin{alignat*}{2}
    \|U_h^n-u^n\|_0
    &\leq  \|U_h^n-I_hu^n\|_0+\|I_hu^n-u^n\|_0\\
    &\leq C\,\|U_h^n-I_hu^n\|_1+Ch^2\|u^n\|_2
    \leq C\, (h^2+\tau^2).
\end{alignat*}
Similarly, we obtain the claimed results for $\phi$.
\end{proof} 
Estimate \eqref{ineq:mainT:c} demonstrates that the fully discrete solution is superclose to the interpolated exact solution in the $H^1$-norm, i.e., the discrete solution approximates the interpolated exact solution more accurately than the solution itself. This supercloseness property can be used to obtain global superconvergence through suitable post-processing.  

Following the approach in~\cite{yang2023}, we apply an interpolation based post-processing technique to enhance the accuracy of the fully discrete solution. To this end, we consider a local post-processing operator~$I_{2h}\colon C(\widetilde{K}) \to \mathcal{P}(\widetilde{K})$ as introduced in \cite{lin2007} on a {macroelement} $\widetilde{K}$, which consists of four standard elements, cf.~Figure~\ref{fig:macroelement}. This corresponds to a coarser mesh with mesh size~$2h$.
For rectangular macroelements, we set~$\mathcal{P}(\widetilde{K}) = Q_2^2(\widetilde{K})$, the space of bi-quadratic polynomials. The operator~$I_{2h}$ preserves nodal values at the nine vertices of each macroelement and maps into the space of polynomials of degree at most two in each variable. 
For triangular macroelements, we set $\mathcal{P}(\widetilde{K}) = {P}_2(\widetilde{K})$ with the space of polynomials of total degree at most two. In this case, the operator preserves nodal values at the six vertices of each macroelement. 
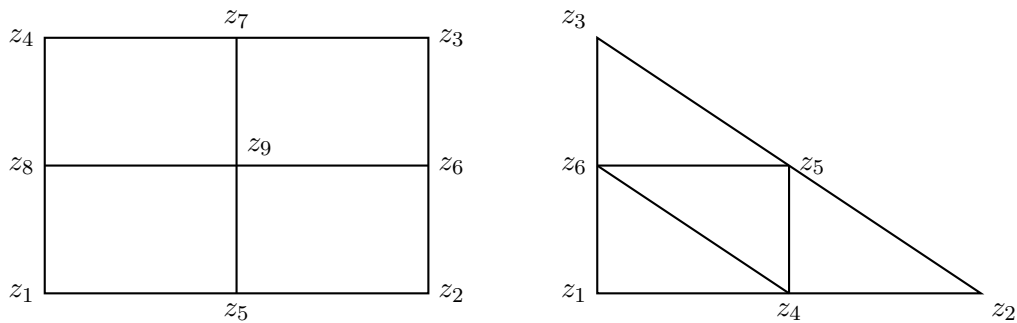
\begin{figure}
    \centering
    \begin{tikzpicture}[scale=0.85]
        \draw[thick] (0,0) rectangle (6,4);
        \draw[thick] (3,0) -- (3,4);
        \draw[thick] (0,2) -- (6,2);
        \node[left] at (0,0) {$ z_1 $};
        \node[right] at (6,0) {$ z_2 $};
        \node[right] at (6,4) {$ z_3 $};
        \node[left] at (0,4) {$ z_4 $};
        \node[anchor=south west] at (3,2) {$ z_9 $};
        \node[below] at (3,0) {$ z_5 $};
        \node[right] at (6,2) {$ z_6 $};
        \node[above] at (3,4) {$ z_7 $};
        \node[left] at (0,2) {$ z_8 $};
    \end{tikzpicture}
    \hspace{2em}
    \begin{tikzpicture}[scale=0.85]
        \coordinate (A) at (0,0);
       \coordinate (B) at (6,0);
        \coordinate (C) at (0,4);
        \coordinate (D) at (3,0); 
       \coordinate (E) at (3,2); 
        \coordinate (F) at (0,2); 
        \draw[thick] (A) -- (B) -- (C) -- cycle;
        \draw[thick] (D) -- (F);
        \draw[thick] (D) -- (E);
        \draw[thick] (E) -- (F);
        \node[left] at (A) {$z_1$};
        \node[below right] at (B) {$z_2$};
        \node[above left] at (C) {$z_3$};
        \node[below] at (D) {$z_4$};
        \node[right] at (E) {$z_5$};
        \node[left] at (F) {$z_6$};
    \end{tikzpicture}
    \caption{Macroelement $\widetilde{K}$ for a quadrilateral and a triangle.}
    \label{fig:macroelement}
\end{figure}

This post-processing step improves the approximation quality by using the supercloseness property of the finite element solution. While the original approximation is only first-order accurate in the $H^1$-norm, the post-processed solution on the coarser mesh achieves a higher order of convergence.  
We derive a global superconvergence estimate under suitable regularity assumptions, based on the approximation and stability properties of the interpolation operator~$I_{2h}$, cf.~\cite{lin2007}. Specifically, it holds 
\begin{subequations}
\label{eq:I_2h}
    \begin{align}
        I_{2h} I_h \omega\, 
        &=\, I_{2h} \omega,\label{eq:I_2h:a}\\
        \| \omega - I_{2h} \omega \|_1 \,
        &\leq\,  C h^2\, \| \omega \|_3,\label{eq:I_2h:b}\\
        \| I_{2h} v_h \|_1 \,
        &\leq\,  C\, \| v_h \|_1 \label{eq:I_2h:c}
    \end{align}
\end{subequations}
for all $\omega \in H^3(\Omega)$ and $v_h \in \Vo$. 
\begin{theorem}(Superconvergence)
\label{thm:global}
Suppose that \eqref{eq:regularity} is satisfied. Then, we have 
\begin{equation*}
    \|I_{2h}U_h^n-u^n\|_1\, +\, \|I_{2h}\Phi_h^n-\phi^n\|_1
    \leq C\, (h^2+\tau^2).
\end{equation*}
\end{theorem}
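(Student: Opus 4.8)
The plan is to combine the supercloseness estimate \eqref{ineq:mainT:c} with the three structural properties \eqref{eq:I_2h} of the post-processing operator $I_{2h}$ through a single triangle-inequality decomposition, carried out identically for the temperature and the potential. For the temperature I would insert the interpolant $I_h u^n$ and write
\[
    \|I_{2h}U_h^n - u^n\|_1 \leq \|I_{2h}U_h^n - I_{2h}I_h u^n\|_1 + \|I_{2h}I_h u^n - u^n\|_1,
\]
exploiting the linearity of $I_{2h}$ to rewrite the first term as $\|I_{2h}(U_h^n - I_h u^n)\|_1$.

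For this first, purely discrete, term I would note that since $u=0$ on $\partial\Omega$ we have $I_h u^n \in \Vo$, while $U_h^n \in \Vo$ by construction, so that $U_h^n - I_h u^n \in \Vo$. The stability bound \eqref{eq:I_2h:c} then applies and gives $\|I_{2h}(U_h^n - I_h u^n)\|_1 \leq C\,\|U_h^n - I_h u^n\|_1$, which the supercloseness estimate \eqref{ineq:mainT:c} bounds by $C(h^2 + h\tau^2)$. For the second, consistency, term the commuting identity \eqref{eq:I_2h:a} replaces $I_{2h}I_h u^n$ by $I_{2h}u^n$, so that \eqref{eq:I_2h:b} together with the regularity assumption \eqref{eq:regularity} (which furnishes $\|u^n\|_3 \leq C$) yields $\|I_{2h}u^n - u^n\|_1 \leq C h^2\,\|u^n\|_3 \leq C h^2$. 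Adding the two contributions produces the claimed bound for $U_h^n$.

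For the potential I would repeat the very same decomposition with $\phi^n$ in place of $u^n$. The only point requiring care -- and the main, if modest, obstacle -- is the inhomogeneous boundary condition: whereas $U_h^n$ was homogeneous, here $\Phi_h^n|_{\partial\Omega} = I_h g^n$ while $\phi^n = g^n$ on $\partial\Omega$. Since $I_h\phi^n$ and $\Phi_h^n$ share the boundary data $I_h g^n$, their difference $\Phi_h^n - I_h\phi^n$ vanishes on $\partial\Omega$ and hence again lies in $\Vo$, so that the stability estimate \eqref{eq:I_2h:c} remains applicable. The remaining steps -- bounding the discrete part via \eqref{ineq:mainT:c} and the consistency part via \eqref{eq:I_2h:a}, \eqref{eq:I_2h:b}, and the regularity $\|\phi^n\|_3 \leq C$ -- are verbatim the same. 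Summing the two components completes the proof; since no estimate beyond those already established is needed, I expect the argument to be short.
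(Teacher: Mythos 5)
Your proposal is correct and follows essentially the same route as the paper: the triangle inequality inserting $I_{2h}I_h u^n$, then the stability bound \eqref{eq:I_2h:c} combined with the supercloseness estimate \eqref{ineq:mainT:c} for the discrete part, and the identity \eqref{eq:I_2h:a} with the approximation property \eqref{eq:I_2h:b} and the regularity \eqref{eq:regularity} for the consistency part. Your explicit check that $\Phi_h^n - I_h\phi^n \in \Vo$ (both sharing the boundary data $I_h g^n$) is a welcome detail that the paper elides with ``similarly,'' but it does not change the argument.
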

    \begin{proof}
        According to \eqref{ineq:mainT:c} and the properties of the operator $I_{2h}$ given in~\eqref{eq:I_2h}, one can verify that 
        \begin{alignat*}{2}
            \|I_{2h}U_h^n-u^n\|_1&\leq  \|I_{2h}U_h^n-I_{2h}I_hu^n\|_1+\|I_{2h}I_hu^n-u^n\|_1\\
            &\leq \|I_{2h}(U_h^n-I_hu^n)\|_1+\|I_{2h}u^n-u^n\|_1\\
            &\leq C\,\|I_hu^n-U_h^n\|_1 + Ch^2\|u^n\|_3\\
            &\leq C\, (h^2+\tau^2),
        \end{alignat*}
which establishes the desired result for $u$. The estimate for $\phi$ can be derived similarly.
\end{proof}
%
%
\section{Numerical results}\label{Sec.Numeric}
In this section, we present numerical experiments that validate the theoretical analysis and illustrate the performance of the proposed method. Moreover, we study a number of examples beyond the theoretical basis. In all computations, the final time is set to $T=1.0$.
%
\subsection{Convergence and stability}  
In this first example, we verify the spatial and temporal convergence properties of the proposed method using a manufactured solution. To this end, artificial source terms are introduced so that the exact solution is known. As in~\cite{li2014,gao2016}, we consider the problem
\begin{subequations}\label{eq:thermistor:f}
    \begin{align}
    \dot{u}-\Delta u 
    &= \sigma(u)|\nabla \phi|^2 + f_1,\\
    -\nabla \cdot(\sigma(u)\nabla \phi) 
    &= f_2
\end{align}
\end{subequations}
in $\Omega = (0,1)^2$ with electric conductivity
\[
    \sigma(u) = \frac{1}{1+u^2}+1.
\]
Furthermore, the functions $f_1$ and $f_2$, along with the initial and boundary conditions are determined corresponding to the manufactured solution
\[
    u(x,y,t) 
    = e^{-2t}\sin(\pi x)\sin(\pi y),\qquad 
    \phi(x,y,t) 
    = 1 + \sin(x+y+t).
\]
%
We partition the domain $\Omega$ into squares with grid resolutions of $M\times M$ with $M = 32, 64, 128, 256$. The mesh size is given by $h = \sqrt{2}/M$. To verify the error estimates presented in Theorems~\ref{thm:main} and~\ref{thm:global}, we first set the time step size to $\tau = h$. 
The convergence, supercloseness, and superconvergence results for $u$ and $\phi$ at time $t=T$ are reported in Tables~\ref{tab:u_convergence} and~\ref{tab:phi_convergence}, respectively. From Table~\ref{tab:u_convergence}, we observe that $\|U_h^n - u^n \|_1$ converges at a rate of $\calO(h)$, while $\| U_h^n -u^n \|_0$, $\| U_h^n - I_h u^n \|_1$, and $\| I_{2h} U_h^n - u^n\|_1$ exhibit convergence rates of order two as expected. Similarly, Table~\ref{tab:phi_convergence} shows that $\| \Phi_h^n - \phi^n\|_1$ converges with order one, whereas $\| \Phi_h^n - \phi^n \|_0$, $\|  \Phi_h^n - I_h \phi^n  \|_1$, and $\| I_{2h} \Phi_h^n - \phi^n \|_1$ converge at rate $\calO(h^2)$. Note that all these numerical results align with the theoretical analysis of Section~\ref{Sec:main}. 
\begin{table}
\centering
\caption{Numerical errors for $u$ at final time $t=T$ using the second-order BDF--Galerkin FEM~\eqref{eq:BDF:Galerkin} and $\tau = h$.}
\label{tab:u_convergence}
\begin{tabular}{c|cccc}
\toprule
\text{mesh}  & $32\times 32$ & $64\times 64$ & $128\times 128$ & $256\times 256$\\
\midrule
$\|U_h^n - u^n\|_0$ 
& $8.33\cdot 10^{-5}$ 
& $2.07\cdot 10^{-5}$ 
& $5.23\cdot 10^{-6}$ 
& $1.31\cdot 10^{-6}$ \\
order 
& -- 
& $2.01$ 
& $1.98$ 
& $2.00$ \\
\addlinespace
$\|U_h^n - u^n\|_1$ 
& $1.42\cdot 10^{-2}$ 
& $7.13\cdot 10^{-3}$ 
& $3.65\cdot 10^{-3}$ 
& $1.82\cdot 10^{-3}$ \\
order 
& -- 
& $1.00$ 
& $0.97$ 
& $1.00$ \\
\addlinespace
$\|U_h^n - I_h u^n \|_1$ 
& $3.90\cdot 10^{-4}$ 
& $9.52\cdot 10^{-5}$ 
& $2.39\cdot 10^{-5}$ 
& $5.94\cdot 10^{-6}$ \\
order 
& -- 
& $2.03$ 
& $1.99$ 
& $2.01$ \\
\addlinespace
$\|I_{2h} U_h^n - u^n \|_1$ 
& $2.64\cdot 10^{-3}$ 
& $6.13\cdot 10^{-4}$ 
& $1.50\cdot 10^{-4}$ 
& $3.65\cdot 10^{-5}$ \\
order 
& -- 
& $2.11$ 
& $2.03$ 
& $2.03$ \\
\bottomrule
\end{tabular}
\end{table}

\begin{table}
\centering
\caption{Numerical errors for $\phi$ at final time $t=T$ using the second-order BDF--Galerkin FEM~\eqref{eq:BDF:Galerkin} and $\tau = h$.}
\label{tab:phi_convergence}
\begin{tabular}{c|cccc}
\toprule
\text{mesh}  & $32\times 32$ & $64\times 64$ & $128\times 128$ & $256\times 256$\\
\midrule
$\|\Phi_h^n - \phi^n\|_0$ 
& $6.75\cdot 10^{-6}$ 
& $1.56\cdot 10^{-6}$ 
& $3.86\cdot 10^{-7}$ 
& $9.46\cdot 10^{-8}$ \\
order 
& -- 
& $2.11$ 
& $2.02$ 
& $2.03$ \\
\addlinespace
$\|\Phi_h^n - \phi^n\|_1$ 
& $1.87\cdot 10^{-2}$ 
& $9.37\cdot 10^{-3}$ 
& $4.71\cdot 10^{-3}$ 
& $2.36\cdot 10^{-3}$ \\
order 
& -- 
& $1.00$ 
& $0.99$ 
& $1.00$ \\
\addlinespace
$\|\Phi_h^n - I_h \phi^n \|_1$ 
& $4.77\cdot 10^{-5}$ 
& $1.10\cdot 10^{-5}$ 
& $2.69\cdot 10^{-6}$ 
& $6.58\cdot 10^{-7}$ \\
order 
& -- 
& $2.12$ 
& $2.03$ 
& $2.03$ \\
\addlinespace
$\|I_{2h} \Phi_h^n - \phi^n \|_1$ 
& $6.38\cdot 10^{-4}$ 
& $1.42\cdot 10^{-4}$ 
& $3.29\cdot 10^{-5}$ 
& $7.93\cdot 10^{-6}$ \\
order 
& -- 
& $2.17$ 
& $2.11$ 
& $2.05$ \\
\bottomrule
\end{tabular}
\end{table}

To verify the unconditional convergence and accuracy of the proposed method, we conduct two complementary simulations. 
First, we fix the time step size $\tau$ 
and only refine the spatial mesh. The resulting errors for $u$ and $\phi$ (for different values of $\tau$) are displayed in Figure~\ref{Fig:test1:errors:M}, demonstrating that the numerical errors stabilize as $h/\tau\rightarrow0$ and that the proposed method does not require a time step restriction. 
Second, to show the temporal order of convergence, we fix the mesh size $h$, and reduce $\tau$. The $L^2$-errors presented in Figure~\ref{Fig:test1:errors:tau} show second-order convergence, consistent with the theoretical properties of the proposed BDF--Galerkin method. Together, these results verify that the scheme achieves unconditional and optimal convergence in space and time. 
\begin{figure}
    \centering
    \begin{tikzpicture}

\begin{axis}[%
width=2.8in,
height=1.7in,
scale only axis,
xmode=log,
xlabel={$M$},
ylabel={\(\|(U_h^N, \Phi_h^N) - (u^N, \phi^N)\|_0\)},
xmin=7,
xmax=290,
xminorticks=true,
ymode=log,
ymin=1e-6,
ymax=2e-3,
yminorticks=true,
axis background/.style={fill=white},
legend style={at={(1.48,1.0)}, legend cell align=left, align=left, draw=white!15!black},
legend columns=1
]

\addplot [color=mycolor1, mark=square*, line width=1.5pt]
  table[row sep=crcr]{%
    8    0.001200\\
    16   0.000388\\
    32   0.000193\\
    64   0.000146\\
    128  0.000134\\
    256  0.000131\\
};
\addlegendentry{\(\tau=0.1\)}

\addplot [color=mycolor2, mark=*, line width=1.5pt]
  table[row sep=crcr]{%
    8    0.001108\\
    16   0.000293\\
    32   0.000094\\
    64   0.000046\\
    128  0.000034\\
    256  0.000031\\
};
\addlegendentry{\(\tau=0.05\)}

\addplot [color=mycolor3, mark=triangle*, line width=1.5pt, mark size=2.8pt]
  table[row sep=crcr]{%
    8    0.001087\\
    16   0.000272\\
    32   0.000073\\
    64   0.000023\\
    128  0.000011\\
    256  0.000008\\
};
\addlegendentry{\(\tau=0.025\)}

\addplot [color=mycolor4, mark=pentagon*, line width=1.5pt]
  table[row sep=crcr]{%
    8    0.001082\\
    16   0.000267\\
    32   0.000068\\
    64   0.000018\\
    128  0.000006\\
    256  0.000003\\
};
\addlegendentry{\(\tau=0.0125\)}

\addplot[dashed, black, line width=1.0pt] coordinates {(8, 5e-4) (128, 1.953125e-6)};
\addlegendentry{order 2}

\end{axis}

\end{tikzpicture}
    \caption{Convergence history of the $L^2$-errors at time $t=T$ on gradually refined spatial meshes for different but fixed $\tau$.}\label{Fig:test1:errors:M}
\end{figure}
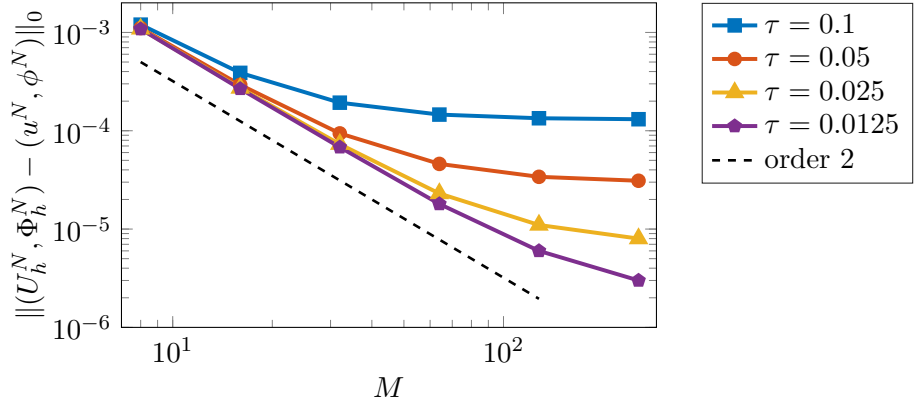
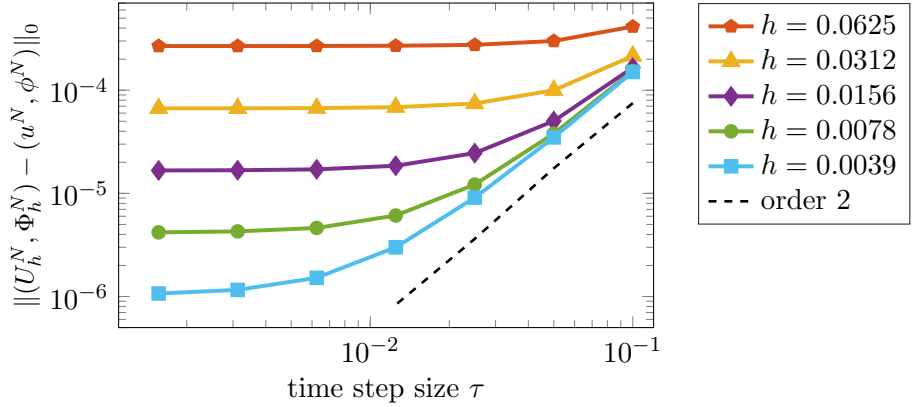
\begin{figure}
\centering
    \begin{tikzpicture}
\begin{axis}[%
width=2.8in,
height=1.7in,
scale only axis,
xmode=log,
xlabel={time step size $\tau$},
ylabel={\small$\|(U_h^N,\Phi_h^N)-(u^N,\phi^N)\|_0$},
xmin=0.0011,
xmax=0.12,
xminorticks=true,
ymode=log,
ymin=0.0000005,
ymax=0.0007,
yminorticks=true,
axis background/.style={fill=white},
legend style={at={(1.48,1.0)}, legend cell align=left, align=left, draw=white!15!black},
legend columns=1
]


\addplot [color=mycolor2, mark=pentagon*, line width=1.5pt]
  table[row sep=crcr]{%
0.1         0.00041381\\
0.05        0.00030004\\
0.025       0.00027519\\
0.0125      0.00026969\\
0.00625     0.00026841\\
0.003125    0.00026810\\
0.0015625   0.00026802\\
};
\addlegendentry{$h=0.0625$} 

\addplot [color=mycolor3, mark=triangle*, line width=1.5pt, mark size=2.8pt]
  table[row sep=crcr]{%
0.1         0.00021481\\
0.05        0.00009993\\
0.025       0.00007435\\
0.0125      0.00006845\\
0.00625     0.00006710\\
0.003125    0.00006679\\
0.0015625   0.00006671\\
};
\addlegendentry{$h= 0.0312$} 

\addplot [color=mycolor4, mark=diamond*, line width=1.5pt, mark size=2.8pt]
  table[row sep=crcr]{%
0.1         0.00016549\\
0.05        0.00005031\\
0.025       0.00002457\\
0.0125      0.00001852\\
0.00625     0.00001709\\
0.003125    0.00001676\\
0.0015625   0.00001668\\
};
\addlegendentry{$h=0.0156$} 

\addplot [color=mycolor5, mark=*, line width=1.5pt]
table[row sep=crcr]{%
0.1         0.00015321\\
0.05        0.00003796\\
0.025       0.00001218\\
0.0125      0.00000610\\
0.00625     0.00000462\\
0.003125    0.00000428\\
0.0015625   0.00000419\\
};
\addlegendentry{$h=0.0078$} 

\addplot [color=mycolor6, mark=square*, line width=1.5pt]
table[row sep=crcr]{%
0.1        0.00015014\\
0.05       0.00003489\\
0.025      0.00000909\\
0.0125     0.00000300\\
0.00625    0.00000152\\
0.003125   0.00000116\\
0.0015625  0.00000107\\
};
\addlegendentry{$h=0.0039$}

\addplot [color=black, dashed, line width=1.0pt]
table[row sep=crcr]{%
0.1       0.000075\\
0.05      0.0000175\\
0.025     0.000003625\\
0.0125    0.00000083125\\
};
\addlegendentry{order $2$} 

\end{axis}
\end{tikzpicture}
    \caption{Convergence history of the $L^2$-errors in $u$ and $\phi$ for fixed spatial meshes and refinements of the time step size~$\tau$.}\label{Fig:test1:errors:tau}
\end{figure}
%
\subsection{Extension to higher order}
This subsection is devoted to a numerical study of an implicit--explicit BDF--Galerkin scheme of third order in time. Following the construction of Section~\ref{Sec.Galerkin}, we combine the standard BDF-$3$ operator 
\[
    D_\tau^* U_h^n 
    = \frac{1}{6\tau}\, \Big(11\, U_h^n - 18\,U_h^{n-1} + 9\,U_h^{n-2} - 2\,U_h^{n-3}\Big),\qquad n\ge3
\]
with a third-order extrapolation of the conductivity, namely 
\[
    \sigma^*(U_h^{n-1})
    \coloneqq 3\,\sigma(U_h^{n-1}) - 3\,\sigma(U_h^{n-2}) + \sigma(U_h^{n-3}).
\]
To initialize the method for $n \leq 2$, we use the exact solution, ensuring that the initial error does not affect the third-order accuracy. Alternatively, one may follow~\cite{gao2016} and compute the starting values by a lower-order initialization procedure involving an implicit--explicit Euler step and Crank--Nicolson-type corrections. This then yields approximations with temporal error of order $O(\tau^3)$. Such a construction preserves the global third-order accuracy of the scheme. 
For $n=3,\ldots,N$, the fully discrete third-order BDF--Galerkin FEM then takes the form
\begin{subequations}
\label{eq:BDF3}
\begin{align}
    (D_{\tau}^*U_h^{n},\xi_u) + (\nabla U_h^{n},\nabla \xi_u) 
    &= \big(\sigma^*(U_h^{n-1})|\nabla\Phi_{h}^{n}|^2,\xi_u\big),\\
    \big(\sigma^*(U_h^{n-1})\nabla\Phi_h^{n},\nabla\xi_\phi\big) 
    &= 0 
\end{align}
\end{subequations}
for all $\xi_u,\, \xi_\phi\in \Vo$. Note that the boundary conditions remain unchanged. 

Since we expect an $L^2$-error of order $\calO(h^2+\tau^3)$, we choose $\tau = h^{2/3}$ for the time step size. Indeed, this leads to a convergence rate of order two. Also for the other error components, we obtain the expected optimal rates, i.e., order one in the $H^1$-norm and order two in the supercloseness and superconvergence estimates. 
Finally, to verify the expected third-order convergence in time, we fix a sufficiently fine spatial mesh and reduce the time step size. The resulting $L^2$-errors at time $t=T$ are plotted in Figure~\ref{Fig:BDF3:errors:tau}, indicating third-order convergence in time.
\begin{figure}
\centering
    \begin{tikzpicture}

\begin{axis}[%
width=2.8in,
height=1.7in,
scale only axis,
xmode=log,
xlabel={time step size $\tau$},
ylabel={\small$\|(U_h^N,\Phi_h^N)-(u^N,\phi^N)\|_0$},
xmin=0.0012,
xmax=0.12,
xminorticks=true,
ymode=log,
ymin=0.0000005,
ymax=0.0005,
yminorticks=true,
axis background/.style={fill=white},
legend style={at={(1.48,1.0)}, legend cell align=left, align=left, draw=white!15!black},
legend columns=1
]

\addplot [color=mycolor2, mark=pentagon*, line width=1.5pt]
  table[row sep=crcr]{%
0.1         0.00027232\\
0.05       0.00026618\\
0.025      0.00026773\\
0.0125     0.00026796\\
0.00625    0.00026799\\
0.003125   0.00026799\\
0.0015625  0.00026799\\
};
\addlegendentry{$h=0.0625$} 

\addplot [color=mycolor3, mark=triangle*, line width=1.5pt, mark size=2.8pt]
table[row sep=crcr]{%
0.1         0.00007373\\
0.05       0.00006588\\
0.025      0.00006643\\
0.0125     0.00006665\\
0.00625    0.00006668\\
0.003125   0.00006668\\
0.0015625  0.00006668\\
};
\addlegendentry{$h=0.0312$} 

\addplot [color=mycolor4, mark=diamond*, line width=1.5pt, mark size=2.8pt]
table[row sep=crcr]{%
0.1        0.00002474\\
0.05      0.00001649\\
0.025     0.00001647\\
0.0125    0.00001662\\
0.00625   0.00001665\\
0.003125  0.00001665\\
0.0015625 0.00001665\\
};
\addlegendentry{$h=0.0156$} 

\addplot [color=mycolor5, mark=*, line width=1.5pt]
table[row sep=crcr]{%
0.1        0.00002534\\
0.05       0.00000420\\
0.025      0.00000409\\
0.0125     0.00000413\\  
0.00625    0.00000416\\  
0.003125   0.00000416\\  
0.0015625  0.00000416\\  
};
\addlegendentry{$h=0.0078$} 

\addplot [color=mycolor6, mark=square*, line width=1.5pt]
table[row sep=crcr]{%
0.1        0.00002834\\
0.05       0.00000247\\
0.025      0.00000102\\
0.0125     0.00000102\\  
0.00625    0.00000104\\  
0.003125   0.00000104\\  
0.0015625  0.00000104\\  
};
\addlegendentry{$h=0.0039$} 

\addplot [color=black, dashed, line width=1.0pt]
table[row sep=crcr]{%
0.1       0.00001\\      
0.05      0.00000125\\   
0.025     0.00000015625\\ 
0.0125    0.00000001953125\\ 
};
\addlegendentry{order $3$} 
\end{axis}
\end{tikzpicture}
    \caption{Convergence history of the $L^2$-errors in $u$ and $\phi$ for the third-order BDF--Galerkin FEM~\eqref{eq:BDF3} for fixed spatial meshes.}
    \label{Fig:BDF3:errors:tau}
\end{figure}
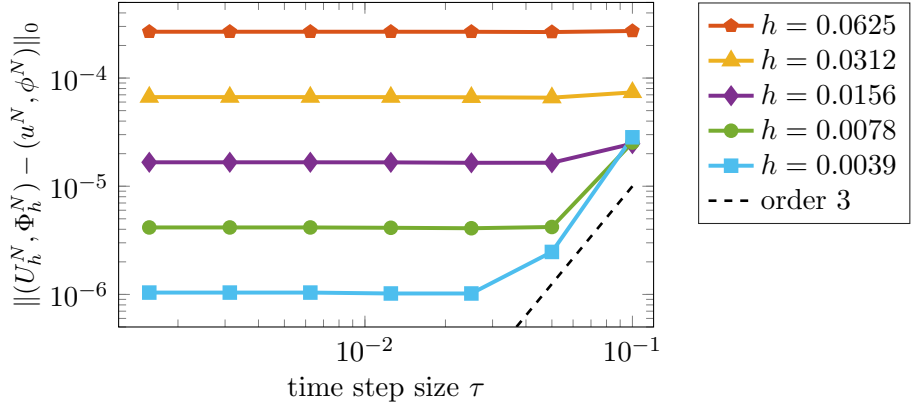
%
\subsection{Comparison to related methods}
In this subsection, we highlight the advantages of our method and present a comparison with related numerical schemes. 
First, we consider the second-order BDF--Galerkin FEM investigated in~\cite{gao2016}, which reads 
\begin{subequations}
\label{eq:BDF2:gao}
    \begin{align}
        (D_{\tau}U_h^{n},\xi_u) + (\nabla U_h^{n},\nabla \xi_u)& =  \left(2\sigma(U_h^{n-1})|\nabla\Phi_{h}^{n-1}|^2-\sigma(U_h^{n-2})|\nabla\Phi_{h}^{n-2}|^2,\xi_u\right), \\
        \left(\sigma(U_h^{n})\nabla\Phi_h^{n},\nabla\xi_\phi\right)& =0
\end{align}
\end{subequations}
for all $\xi_u,\, \xi_\phi\in \Vo$. Note that, compared to scheme~\eqref{eq:BDF:Galerkin}, different extrapolation formulae are used in both equations. To compare the results with the previous experiments, we choose again  $\tau = h$. As shown in Table~\ref{tab:u:gao}, this reduces the convergence order in the interpolation-type errors.
\begin{table}
\caption{Numerical errors and convergence orders in $u$ and $\phi$ at time $t = T$ using the BDF--Galerkin FEM~\eqref{eq:BDF2:gao} with $\tau=h$.}
\label{tab:u:gao}
\centering
\begin{tabular}{c|cccc}
\toprule
\text{mesh}
& $32\times 32$
& $64\times 64$
& $128\times 128$
& $256\times 256$ \\
\midrule
$\|U_h^N-I_h u^N\|_1$
& $2.17\cdot10^{-2}$
& $1.04\cdot10^{-2}$
& $5.25\cdot10^{-3}$
& $2.62\cdot10^{-3}$ \\
order
& --
& $1.05$
& $0.99$
& $1.00$ \\
\midrule
$\|\Phi_h^N-I_h\phi^N\|_1$
& $2.70\cdot10^{-2}$
& $1.34\cdot10^{-2}$
& $6.69\cdot10^{-3}$
& $3.35\cdot10^{-3}$ \\
order
& --
& $1.01$
& $1.00$
& $1.00$ \\
\bottomrule
\end{tabular}
\end{table}

Second, we investigate the influence of the extrapolation term and consider the second-order BDF--Galerkin scheme~\eqref{eq:BDF:Galerkin} but with a first-order extrapolation. Likewise, one may interprete the scheme as the one in~\cite{yang2023}, replacing the backward Euler operator with the second-order BDF operator. More preceisely, we study
\begin{subequations}
\label{eq:BDF2:nm1}
    \begin{align}
        (D_{\tau}U_h^{n},\xi_u) + (\nabla U_h^{n},\nabla \xi_u)& =  \left(\sigma(U_h^{n-1})|\nabla\Phi_{h}^{n}|^2,\xi_u\right), \\
        \left(\sigma(U_h^{n-1})\nabla\Phi_h^{n},\nabla\xi_\phi\right)& =0
\end{align}
\end{subequations}
for all $\xi_u,\, \xi_\phi\in \Vo$. Using again $\tau = h$, not only the interpolation-type error $\| \Phi_h^N - I_h \phi^N \|_1$ but also the $L^2$-error $\|\Phi_h^N - \phi^N\|_0$ shows a reduced convergence order of one.
%
%
\subsection{Further experiments}
To demonstrate the validity of the theoretical analysis for more realistic thermistor problems, we study a number of additional experiments. In the following, we only discuss the numerical results for $u$, since the approximations of $\phi$ exhibit the corresponding convergence behavior. 
%
\subsubsection{Three-dimensional problem}
Although not covered by the theoretical analysis, we also consider a three-dimensional numerical experiment. For this, we consider problem~\eqref{eq:thermistor:f} on the unit cube $\Omega=(0,1)^3$ with right-hand sides such that the exact solution is given by 
\[
    u(x,y,z,t) 
    = e^{-2t}\sin(\pi x)\sin(\pi y)\sin(\pi z),\qquad 
    \phi(x,y,z,t) 
    = 1 + \sin(x+y+z+t).
\]
We partition the domain into cubes with grid resolutions of $M\times M\times M$ with $M = 4, 8, 16 $. The corresponding mesh size is given by $h = \sqrt{3}/M$ and we set $\tau = h$ for the time step size. The convergence, supercloseness, and superconvergence results of the proposed BDF--Galerkin scheme~\eqref{eq:BDF:Galerkin} for $u$ at time $t = T$ are reported in Table~\ref{tab:u_convergence_3d}. One can see that the expected convergence behavior is retained also in three space dimensions. 
\begin{table}
\centering
\caption{Numerical errors for $u$ at final time $t=T$ using the second-order BDF--Galerkin FEM~\eqref{eq:BDF:Galerkin} for the three-dimensional problem.}
\label{tab:u_convergence_3d}
\begin{tabular}{c|ccc}
\toprule
\text{mesh}  & $4\times4\times4$ & $8\times8\times8$ & $16\times16\times16$ \\
\midrule
$\|U_h^n - u^n\|_0$ 
& $2.4449\cdot 10^{-3}$ 
& $5.5167\cdot 10^{-4}$ 
& $1.2898\cdot 10^{-4}$ \\
order 
& -- 
& $2.15$ 
& $2.10$ \\
\addlinespace
$\|U_h^n - u^n\|_1$ 
& $7.2627\cdot 10^{-2}$ 
& $3.2521\cdot 10^{-2}$ 
& $1.7377\cdot 10^{-2}$ \\
order 
& -- 
& $1.16$ 
& $0.90$ \\
\addlinespace
$\|U_h^n - I_h u^n \|_1$ 
& $2.5875\cdot 10^{-2}$ 
& $5.1381\cdot 10^{-3}$ 
& $1.0929\cdot 10^{-3}$ \\
order 
& -- 
& $2.33$ 
& $2.23$ \\
\addlinespace
$\|I_{2h} U_h^n - u^n \|_1$ 
& $7.8195\cdot 10^{-2}$ 
& $1.1407\cdot 10^{-2}$ 
& $3.7514\cdot 10^{-3}$ \\
order 
& -- 
& $2.78$ 
& $1.60$ \\
\bottomrule
\end{tabular}
\end{table}
%
\subsubsection{Different conductivities}
In the upcoming examples, we consider the original system~\eqref{eq:thermistor} without artificial forcing. To evaluate the spatial convergence behavior, a reference solution is computed on a fine mesh ($M=256$, $\tau = 2^{-9}$). 
The computations are then performed on uniform rectangular meshes with mesh size $h=\sqrt{2}/M$, $M=16, 32, 64$, and time step size $\tau=h$. 

We investigate the numerical performance for different representative choices of the conductivity function $\sigma$. For this, we consider a strongly decreasing 
\begin{equation}\label{eq:sigma:decreasing}
\sigma(u) = \frac{2}{1+e^u}+0.1    
\end{equation}
as well as an increasing behavior, 
\begin{equation}\label{eq:sigma:increasing}
\sigma(u)=0.5+0.5\tanh(u).
\end{equation}
Such nonlinear conductivities are admissible under the classical thermistor theory, provided they are continuous, uniformly positive, and bounded~\cite{AntC94}. Similar choices are standard in both analytical and numerical studies of thermistor problems~\cite{chen1993,durojaye2019}. Convergence rates in the $L^2$- and $H^1$-norms, as well as superconvergence-related quantities, are evaluated in order to assess whether the theoretical error estimates suffer from different nonlinear conductivity laws. The resulting errors for the strongly decreasing conductivity~\eqref{eq:sigma:decreasing} are listed in Table~\ref{tab:u_convergence_sigmaB}, indicating that the proposed method preserves its optimal convergence and superconvergence properties. Very similar results are obtained for the increasing conductivity~\eqref{eq:sigma:increasing} such that we omit the actual convergence results. 
\begin{table}
\centering
\caption{ Numerical errors for the temperature $u$ at final time $t=T$ for the thermistor problem with strongly decreasing conductivity \eqref{eq:sigma:decreasing}, computed by the second-order BDF--Galerkin FEM~\eqref{eq:BDF:Galerkin} with $\tau=h$.}
\label{tab:u_convergence_sigmaB}
\begin{tabular}{c|ccc}
\toprule
\text{mesh} & $16\times16$ & $32\times32$ & $64\times64$ \\
\midrule
$\|U_h^n - u^n\|_0$
& $2.710\times 10^{-4}$
& $6.812\times 10^{-5}$
& $1.720\times 10^{-5}$ \\
order
& -- 
& $1.99$
& $1.99$ \\
\addlinespace
$\|U_h^n - u^n\|_1$
& $2.234\times 10^{-2}$
& $1.100\times 10^{-2}$
& $4.855\times 10^{-3}$ \\
order
& --
& $1.02$
& $1.18$ \\
\addlinespace
$\|U_h^n - I_h u^n \|_1$
& $2.163\times 10^{-3}$
& $6.389\times 10^{-4}$
& $1.812\times 10^{-4}$ \\
order
& --
& $1.76$
& $1.82$ \\
\addlinespace
$\|I_{2h} U_h^n - u^n \|_1$
& $9.252\times 10^{-4}$
& $2.401\times 10^{-4}$
& $6.031\times 10^{-5}$ \\
order
& --
& $1.95$
& $1.99$ \\
\bottomrule
\end{tabular}
\end{table}
%
\subsubsection{Non-convex domains}
Next, we consider system~\eqref{eq:thermistor} on the non-convex L-shape domain 
\[
 \Omega = (0,1)^2\, /\, ([0.5,1]\times[0.5,1]),
\]
which is known to cause reduced regularity in elliptic and parabolic problems. In this case, the boundary conditions are selected to generate nontrivial electric currents, i.e., $g(x,y)= x$, while no artificial source terms are added to the governing equations. This example allows us to examine the influence of geometric singularities on the numerical solution.  
The resulting convergence behavior for $u$, showing degraded rates in all measures, is given in Table~\ref{tabL:u_convergence}. 
\begin{table}
\centering
\caption{Numerical errors for $u$ at final time $t=T$ on the L-shape, computed with the second-order BDF--Galerkin FEM~\eqref{eq:BDF:Galerkin} and {$\tau = h$}.}
\label{tabL:u_convergence}
\begin{tabular}{c|ccc}
\toprule
\text{mesh}  & $16\times 16$ & $32\times 32$ & $64\times 64$ \\ 
\midrule
$\|U_h^n - u^n\|_0$ 
& $7.709\cdot 10^{-4}$ 
& $2.199\cdot 10^{-4}$ 
& $6.657\cdot 10^{-5}$ \\
order 
& -- 
& $1.81$ 
& $1.72$ \\
\addlinespace
$\|U_h^n - u^n\|_1$ 
& $4.418\cdot 10^{-2}$ 
& $2.416\cdot 10^{-2}$ 
& $1.358\cdot 10^{-2}$ \\
order 
& -- 
& $0.87$ 
& $0.83$ \\
\addlinespace
$\|U_h^n - I_h u^n \|_1$ 
& $1.194\cdot 10^{-4}$ 
& $5.204\cdot 10^{-5}$ 
& $2.033\cdot 10^{-5}$ \\
order 
& -- 
& $1.20$ 
& $1.36$ \\
\addlinespace
$\|I_{2h} U_h^n - u^n \|_1$ 
& $7.763\cdot 10^{-4}$ 
& $2.225\cdot 10^{-4}$ 
& $6.533\cdot 10^{-5}$ \\
order 
& -- 
& $1.80$ 
& $1.77$ \\
\bottomrule
\end{tabular}
\end{table}
%
\subsubsection{Non-smooth boundary data}
Finally, we investigate the performance of the method under reduced regularity due to non-smooth boundary data. For this, the temperature satisfies homogeneous Dirichlet conditions as before, whereas the electric potential is equipped with discontinuous boundary conditions of the form $g(x,y)=0$ for $x<0.5$ and $g(x,y) =1$ for $x\ge 0.5$. The corresponding errors are reported in Table~\ref{tab:u_nonsmooth}. 
As expected, the discontinuity in the boundary data leads to reduced regularity of the solution and a dramatic drop of the convergence rates. While the $L^2$-errors still show convergence of order one, convergence in the $H^1$-norm is noticeably slower.   
\begin{table}
\centering
\caption{Numerical errors for $u$ at final time $t=T$ with non-smooth boundary conditions, computed using the second-order BDF--Galerkin FEM~\eqref{eq:BDF:Galerkin} and $\tau=h$.}
\label{tab:u_nonsmooth}
\begin{tabular}{c|ccc}
\toprule
\text{mesh} &  $16\times16$ & $32\times32$ & $64\times64$ \\ 
\midrule
$\|U_h^n-u^n\|_0$ 
& $3.438\times10^{-2}$ 
& $1.767\times10^{-2}$ 
& $8.251\times10^{-3}$ \\
order
& -- 
& $0.96$ 
& $1.10$ \\
\addlinespace
$\|U_h^n-u^n\|_1$ 
& $1.596\times10^{0}$ 
& $1.360\times10^{0}$ 
& $1.054\times10^{0}$ \\
order
& -- 
& $0.23$ 
& $0.37$ \\
\addlinespace
$\|U_h^n-I_hu^n\|_1$ 
& $1.582\times10^{0}$ 
& $1.334\times10^{0}$ 
& $1.012\times10^{0}$ \\
order
& -- 
& $0.25$ 
& $0.40$ \\
\addlinespace
$\|I_{2h}U_h^n-u^n\|_1$ 
& $2.719\times10^{-1}$ 
& $2.492\times10^{-1}$ 
& $2.095\times10^{-1}$ \\
order
& -- 
& $0.13$ 
& $0.25$ \\
\bottomrule
\end{tabular}
\end{table} 
%
%
\section{Concluding Remarks}\label{Sec:Conclusion}
We have proposed and analyzed an implicit--explicit BDF--Galerkin method for the time-dependent nonlinear thermistor problem. The resulting decoupling of the equations improves the computational efficiency compared to fully implicit methods. For this scheme, we have established supercloseness and superconvergence results without any restriction on the time step size. Numerical experiments verify the theoretical results and demonstrate that the method reaches the expected accuracy and is indeed unconditionally stable.

While we have focused on BDF of second order, the analytical techniques developed here may be extended to higher-order methods with appropriate extrapolation terms. In particular, this includes the temporal--spatial error splitting technique and the telescope formula of the temporal discretization operator. Also the convergence analysis of the three-dimensional case is a promising field of future research. 
Moreover, the techniques presented in this work may be extended to derive implicit--explicit methods for other nonlinear systems.  
%
%
%
%
\bibliographystyle{alpha} 
\bibliography{bib_Joule}

\end{document}